\theoremstyle{plain}
\newtheorem{thm}{Theorem}
\newtheorem{prop}[thm]{Proposition}
\newtheorem{lemma}[thm]{Lemma}
\newtheorem{rem}[thm]{Remark}
\theoremstyle{definition}
\newtheorem{defn}[thm]{Definition}
\numberwithin{equation}{section}
\numberwithin{thm}{section}
\title{Properties of the continuum seed-bank coalescent
}
\author{
 Likai Jiao \\
  Humboldt-Universit\"{a}t zu Berlin\\
  \texttt{likai.jiao@hu-berlin.de} \\
}
\begin{document}
\maketitle

\begin{abstract}
We study properties of the \textit{continuum seed-bank coalescent} proposed by \cite{jiao2023wright} such as \textit{not coming down from infinity} and \textit{bounds on the expected time to the most recent common ancestor}. We also provide results on the limiting distributions and propose
some interesting open problems.  
	
\end{abstract}
\keywords{Continuum \and Seed-banks \and Coalescent \and Limiting distribution \and Coming down from infinity \and Time to the most recent common ancester}

\section{Introduction}
This paper is devoted to the study of properties for the \textit{continuum seed-bank coalescent} proposed in \cite{jiao2023wright}, such as \textit{not coming down from infinity} and \textit{bounds on the expected time to the most recent common ancestor (MRCA)}, which serves as a further generalization of the relevant results presented in \cite{blath2016}. \textit{Not coming down from infinity} means that the initial infinitely many blocks will not coalesce into finitely many ones at any given time, and \textit{bounds on the expected time to the MRCA} refers to estimations on the expected time it takes for all initial blocks to coalesce into one. As byproducts, we also give a graphical construction of the \textit{continuum seed-bank coalescent}, and find out the limiting distribution of its dual process which is known as the \textit{continuum seed-bank diffusion}. 

The concept of \textit{seed-bank coalescent} initiated by \cite{blath2016} is an extension of the Kingman coalescent (\cite{kingman1982coalescent}, \cite{kingman1982genealogy}) by incorporating a \textit{dormancy} mechanism. During the coalescent process, some blocks of the partition may enter dormant 
states in which they do not participate in the coalescence. As a phenomenon prevalent in the biosphere, \textit{dormancy} has been recognized in recent years as an important evolutionary force in the population evolution (see e.g. \cite{lennon2011microbial}, \cite{shoemaker2018evolution}, \cite{lennon2021principles}). In the aspect of mathematical modeling, since \cite{blath2016}, this dormancy mechanism has been incorporated into various extensions of the Kingman coalescent (see e.g. \cite{blath2020seed}, \cite{greven2022spatial}). 

For the original \textit{seed-bank coalescent}, the dormancy time, i.e., how long 
a dormancy period lasts, follows an exponential distribution since the transition rate $\lambda$ from \textit{dormant} state to \textit{active} sate is a constant. In order to allow for more general dormancy time distributions while preserving the Markov property, \cite{greven2022spatial} proposed the method of considering a countable number of \textit{colored} seed-banks: when a block goes into dormancy, it randomly chooses a \textit{color}, which represents its transition rate $\lambda$ for reviving. As the result, the cumulative distribution function (CDF) of the dormancy time becomes a linear combination of exponential distribution functions, which can approximate a heavy-tailed distribution by tuning the parameters appropriately. This idea is essentially a randomization of the transition rate $\lambda$, inspired by which \cite{jiao2023wright} proposed the more general \textit{continuum seed-bank coalescent} as the limit.

The term \textit{continuum} specifically refers to possibly being \textit{continuous in state}. For a sample composed of finite individuals, the state space of the seed-bank coalescent in \cite{greven2022spatial} is discrete since the distribution of $\lambda$ is discrete, i.e., weighted sum of Dirac measures. However, for the \textit{continuum seed-bank coalescent}, we can take any probability measure $\nu$ on $(0,\infty)$, especially those with support having a continuum cardinality, as the distribution of $\lambda$. Then, the transition rate from \textit{active} state to \textit{dormant} state is described by the measure $\mu:=c\nu$, where $c>0$ is some prescribed total transition rate. For example, taking $\nu$ to be the Gamma distribution $\Gamma(a,b)$ with parameters $a>0, b>0$, the dormancy time has CDF $K(t)=\int_{(0,\infty)}(1-e^{-\lambda t})d\nu=1-\frac{1}{(1+\frac{t}{b})^{a}}, t\geq 0$, i.e., it follows a type of Pareto distribution (Lomax distribution). Obviously, not all CDF can be represented in this way, the scope of applicability is indicated by Bernstein-Widder theorem (\cite{bernstein1929fonctions}): every completely monotone survival function is the Laplace transform of some probability measure $\nu$.

In the spirit of \cite{blath2016}, we will examine certain properties of the generalized \textit{continuum seed-bank coalescent}. The main results are summarized as follows:
\begin{itemize}
	\item \textbf{Not coming down from infinity} 

This is naturally to be expected since the \textit{seed-bank coalescent} (single seed-bank case) does not \textit{come down from infinity} (see Theorem 4.1 in \cite{blath2016}), and now we have more seed-banks. The notion of \textit{come down from infinity} was introduced by \cite{pitman1999coalescents}. For the \textit{continuum seed-bank coalescent}, it implies that $P(N_{t}+||M_{t}||_{TV})<\infty \textit{ for any } t>0)=1$, where $\{(N_{t}, M_{t})\}_{t\geq 0}$ is the process of counting the number of active/dormant blocks in partitions (see Definition \ref{defn:block}), the second component $\{M_{t}\}_{t\geq 0}$ takes finite integer-valued measures as its values, and $||\cdot||_{TV}$ is the total variation norm. But actually the total number of blocks always stays infinite.
\begin{thm}\label{th:coming}
If $N_{0}+||M_{0}||_{TV}$ is countably infinite, then 
\begin{equation}\label{eq:coming}
	P(N_{t}+||M_{t}||_{TV}=\infty, \text{ for any } t\geq 0)=1.
\end{equation}
\end{thm}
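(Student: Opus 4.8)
The starting point is the observation that the total number of blocks $B_t:=N_t+\|M_t\|_{TV}$ is \emph{non-increasing}. Of the three transitions of $(N_t,M_t)$, an active-to-dormant move sends $N\mapsto N-1$ and $\|M\|_{TV}\mapsto\|M\|_{TV}+1$, a dormant-to-active move does the reverse, and both leave $B_t$ unchanged; only a pairwise coalescence of two active blocks sends $N\mapsto N-1$ without a compensating change, and it lowers $B_t$ by exactly one. Writing $C_t$ for the number of coalescence events in $[0,t]$ we therefore have $B_t=B_0-C_t$, and since $B_0=\infty$ by hypothesis the event in \eqref{eq:coming} coincides with $\{C_t<\infty\text{ for all }t\ge0\}$. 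First I would record this as a lemma: the theorem is equivalent to the assertion that coalescence events do not accumulate in any finite time interval.

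Coalescences occur only among active blocks, at total instantaneous rate $\binom{N_s}{2}$, so $C$ has compensator $\int_0^t\binom{N_s}{2}\,ds$ and $\{C_t<\infty\text{ for all }t\}$ holds almost surely as soon as this integral is almost surely finite. Everything thus reduces to an \emph{upper} bound on the active component: the dormancy mechanism must drain active blocks fast enough to prevent a Kingman-type explosion of coalescences. Heuristically the system is self-regulating. Active blocks go dormant at total rate $cN_s$ and wake at total rate $\int_{(0,\infty)}\lambda\,M_s(d\lambda)$; since the flux into color $\lambda$ is $cN_s\,\nu(d\lambda)$ and each such block wakes at rate $\lambda$, the dormant mass near $\lambda$ equilibrates at $\approx cN_s\,\nu(d\lambda)/\lambda$, whence the wake-up rate equilibrates at $\approx cN_s$. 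The active population then behaves like a Kingman coalescent with immigration at a rate comparable to its own size, which stays finite for every $t>0$; this is the behaviour I would aim to establish rigorously.

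To turn this into a proof I would compare with the single-color seed-bank coalescent of \cite{blath2016}. Fix $K$ with $\nu((0,K])>0$ and follow only those blocks that are dormant with color in $(0,K]$; there are infinitely many such blocks (given an infinite reservoir, or, if $N_0=\infty$, because during the initial Kingman-type descent the chance that the next event at active level $k$ is a migration rather than a coalescence is $\tfrac{cN}{\binom{N}{2}+cN}\sim\tfrac{2c}{k}$, and $\sum_k\tfrac{2c}{k}=\infty$ forces infinitely many migrations by a Borel--Cantelli argument, so that $\|M_{0+}\|_{TV}=\infty$). Bumping the wake rate of every block in this sub-collection up to $K$ can only expose it to coalescence sooner, so a monotone coupling should dominate its block count below by that of a single-color seed-bank coalescent with rate $K$, started from infinitely many blocks; by Theorem 4.1 of \cite{blath2016} the latter does not come down from infinity, and \eqref{eq:coming} follows.

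The hard part will be the two places where the color structure and the initial condition interact. First, the ``color'' of a block is reassigned at each dormancy, so the sub-collection selected by color is not stable in time, and one must show the comparison survives the relabelling. Second, and more seriously, the wake-up rate $\int_{(0,\infty)}\lambda\,M_s(d\lambda)$ can be large, or even infinite, when the colors are unbounded and $M_0$ charges high colors, so the self-regulating balance above holds only after an initial transient; controlling the active population --- equivalently, constructing the monotone-in-rate coupling --- uniformly through this transient, without an a priori finite bound on the instantaneous wake-up rate, is the main obstacle.
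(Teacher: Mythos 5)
Your opening reduction contains a genuine error. From $B_t=B_0-C_t$ you conclude that, since $B_0=\infty$, the event \eqref{eq:coming} \emph{coincides} with $\{C_t<\infty\text{ for all }t\}$. Only one implication holds: $C_t<\infty$ forces $B_t=\infty$, but $B_t=\infty$ is perfectly compatible with $C_t=\infty$ (the identity degenerates to $\infty-\infty$). In the main case $N_0=\infty$ one in fact has $C_t=\infty$ a.s.\ for every $t>0$: for large $j$ the coalescence rate $C^2_j$ dwarfs the deactivation rate $cj$, so during the descent from $n$ initial active blocks all but $O(\log n)$ of the reductions of the active count are coalescences, and these occur before time $t$ with probability tending to one by comparison with Kingman; letting $n\to\infty$ gives infinitely many coalescences in $[0,t]$. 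Consequently $\int_0^t C^2_{N_s}\,ds=\infty$ a.s., and the programme of your second paragraph --- proving finiteness of that integral via a self-regulation heuristic --- cannot succeed. The theorem is true not because coalescences fail to accumulate, but because infinitely many blocks escape into dormancy during the descent and enough of them are still dormant at time $t$.

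Your third paragraph is much closer to the paper's actual argument: the parenthetical computation (probability $\sim 2c/k$ of a migration at active level $k$, divergence of $\sum_k 2c/k$) is essentially Lemma \ref{lem:A}, which restricts to deactivations with flag in $(0,\lambda_0]$ and combines Chebyshev (second Borel--Cantelli would also work, since the indicators are independent) with Kingman's coming down from infinity to ensure these infinitely many deactivations occur before any fixed $t>0$. But the two obstacles you flag at the end --- the colour of a block being resampled at each dormancy, and the possibly unbounded wake-up rate --- are exactly where your proposed monotone coupling with the single-colour model of \cite{blath2016} is left unbuilt, and the paper sidesteps both rather than solving them. It compares instead with the \emph{accelerated} coalescent, in which a dormant block simply vanishes upon waking: this is trivially dominated in block count by the true process, it makes the wake-up rate and the relabelling irrelevant, and it turns the survival of the $j$-th deactivated block (flag $\lambda_j\le\lambda_0$, deactivation time $r_j\le t$) into an independent Bernoulli event of probability $e^{-\lambda_j(t-r_j)}\ge e^{-\lambda_0 t}$, whence $P(\|\underline{M}_t\|_{TV}\le K)=0$ for every $K$ by a direct computation --- no appeal to Theorem 4.1 of \cite{blath2016} and no rate-$K$ coupling are needed. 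To repair your write-up you would have to discard the first reduction entirely and supply the accelerated-coalescent comparison (or an equivalent) in place of the unconstructed coupling.
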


\item \textbf{Bounds on the expected time to the most recent common ancestor}

The time to the most recent common ancestor ($T_{MRCA}$) is the distance from the leaf nodes to the root node (MRCA) on the coalescent tree, which is defined by
\begin{equation}
T_{MRCA}(n,m):=\inf\{t>0: (N_{t}^{(n,m)}, M_{t}^{(n,m)})=(1,0)\},	
\end{equation}
where the superscript $(n, m)$ indicates the initial value $(N_{0}, M_{0})=(n, m)$.

Unlike the single seed-bank case, the expectation $E[T_{MRCA}(n,m)]$ of a \textit{continuum seed-bank coalescent} may be infinite. 

\begin{thm}\label{th:mean}
	$E[T_{MRCA}(n,m)]<\infty$ if and only if the expected dormancy time $\int_{(0,\infty)}\frac{1}{\lambda}\mu(d\lambda)<\infty$. 
\end{thm}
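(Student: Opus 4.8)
The plan is to prove the two directions separately, and the conceptual heart of the argument is that the dormancy periods are the only source of unbounded delay in the coalescent.

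First I would set up the right comparison. Starting from $(n,m)$, the process $(N_t,M_t)$ decreases its total block count $N_t + \|M_t\|_{TV}$ only through active–active coalescence events, which require at least two blocks to be simultaneously active. Between such events, blocks are shuttling between the active state (at rate governed by $\mu = c\nu$) and the dormant colors (reviving at the color-specific rate $\lambda$). The key observation is that $T_{MRCA}(n,m)$ can be dominated, up to constants depending only on $n+\|m\|_{TV}$, by a sum of the delays incurred while waiting for blocks to reactivate. Concretely, I would compare the full coalescent to the ``worst case'' in which the very last coalescence cannot happen until a block that has entered dormancy revives; the expected length of a single dormancy excursion into color $\lambda$ is $1/\lambda$, so the expected dormancy time $\int_{(0,\infty)} \tfrac{1}{\lambda}\,\mu(d\lambda)$ is exactly the quantity controlling whether these delays have finite mean.

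\medskip

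\noindent\textbf{Sufficiency ($\int \tfrac{1}{\lambda}\mu(d\lambda)<\infty \Rightarrow E[T_{MRCA}]<\infty$).} The plan is an induction on the total number of blocks $k = n+\|m\|_{TV}$, reducing $T_{MRCA}(n,m)$ to a finite sum of single-step waiting times. At each stage with $k\geq 2$ blocks, I would bound the expected time until the next coalescence by the expected time until two blocks are simultaneously active and then coalesce. A block in dormancy with color $\lambda$ waits an $\mathrm{Exp}(\lambda)$ time to revive, with expectation $1/\lambda$; integrating against the law of the color it chose upon entering dormancy (proportional to $\mu$) gives expected reactivation time controlled by $\tfrac{1}{c}\int\tfrac{1}{\lambda}\mu(d\lambda)<\infty$. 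Thus each of the finitely many coalescence steps has finite expected duration, and summing the $k-1$ steps yields $E[T_{MRCA}(n,m)]<\infty$. I would use the strong Markov property to restart the argument after each coalescence and keep the bound uniform via the finite initial block count.

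\medskip

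\noindent\textbf{Necessity ($E[T_{MRCA}]<\infty \Rightarrow \int\tfrac{1}{\lambda}\mu(d\lambda)<\infty$).} For the converse I would argue by contradiction: assuming $\int\tfrac{1}{\lambda}\mu(d\lambda)=\infty$, I would exhibit a lower bound on $T_{MRCA}$ with infinite expectation. The cleanest route is to consider the two-block case $(n,m)=(2,0)$ (or $(1,1)$), since reaching $(1,0)$ forces at least one block to complete a dormancy excursion and return. I would condition on the color $\lambda$ chosen at the first dormancy event: given color $\lambda$, the reactivation time is $\mathrm{Exp}(\lambda)$ with mean $1/\lambda$, and $T_{MRCA}$ stochastically dominates this excursion length. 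Taking expectations and integrating against the color law gives $E[T_{MRCA}]\geq \text{const}\cdot\int\tfrac{1}{\lambda}\mu(d\lambda)=\infty$, the desired contradiction.

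\medskip

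The main obstacle I anticipate is making the domination in the necessity direction rigorous without the cleaner structure of the single seed-bank case. In the single-seed-bank setting of \cite{blath2016} the dormancy time is $\mathrm{Exp}(\lambda)$ with finite mean automatically, so the subtlety here is precisely that $\mu$ may place too much mass near $\lambda=0$, producing arbitrarily long excursions. I must ensure the event ``some block enters a color $\lambda$ near $0$ and must revive before the final coalescence'' carries enough probability to transfer the divergence of $\int\tfrac{1}{\lambda}\mu(d\lambda)$ into divergence of the expectation; this requires a careful lower bound showing the coalescent genuinely waits out the slow excursion rather than coalescing through some alternative path. Controlling this — rather than the routine geometric-sum bookkeeping of the sufficiency direction — is where the real work lies.
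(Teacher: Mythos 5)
Your necessity direction is essentially the paper's: with positive probability some block enters dormancy before the final coalescence, its color is $\mu/c$-distributed, and conditionally on color $\lambda$ the revival time is exponential with mean $1/\lambda$, so $E[T_{MRCA}]\geq P(\text{a dormancy event occurs})\cdot\frac{1}{c}\int_{(0,\infty)}\frac{1}{\lambda}\,\mu(d\lambda)=\infty$. That half is sound and requires no more than the care you already anticipate.

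The sufficiency direction, however, has a genuine gap at its central step. You reduce the claim to showing that each of the $k-1$ coalescence stages has finite expected duration, and you justify this by observing that a single dormancy excursion has finite mean $\frac{1}{c}\int_{(0,\infty)}\frac{1}{\lambda}\,\mu(d\lambda)$. That observation does not by itself bound the expected waiting time until two blocks are \emph{simultaneously} active and the coalescence clock rings: between coalescences each block may cycle through dormancy arbitrarily many times, and you must rule out the two blocks perpetually ``missing'' each other. The paper closes exactly this hole with renewal theory: by the renewal theorem applied to the alternating renewal process of a single ancestral line (see the discussion around Proposition \ref{prop:limit}), $P(\text{block } i \text{ is active at } t)$ converges to the strictly positive limit $\bigl(1+\int_{(0,\infty)}\frac{1}{\lambda}\mu(d\lambda)\bigr)^{-1}$; independence of distinct ancestral lines then yields a uniform lower bound $p_{\epsilon}>0$ on the probability that both members of a given pair are active whenever their Poisson clock rings after some finite time $T_{i,j}(\epsilon)$, and Lemma \ref{lem:geo} together with Wald's identity converts this into $E[T^{i,j}_{MRCA}]\leq T_{i,j}(\epsilon)+1/p_{\epsilon}<\infty$. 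Note also that the paper avoids your induction over coalescence events entirely via the cruder but cleaner pairwise bound $T_{MRCA}(n,m)\leq\sum_{1\leq i<j\leq K}T^{i,j}_{MRCA}$, which sidesteps a secondary difficulty in your scheme: after each coalescence the configuration of dormant colors is random, so your per-stage bound would have to be shown integrable over the random states reached, whereas the pairwise bound only ever refers to two fixed ancestral lines run from time $0$. To repair your argument, insert the renewal-theorem step (or derive the positive limiting active-probability directly from the Kolmogorov forward equation, as the paper does) before asserting that each stage has finite mean.
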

Under the assumption that the transition rates of initial dormant individuals are independent $\nu$-distributed random variables, we have the following result providing asymptotic bounds of $E[T_{MRCA}(n,m)]$. The proof employs the same approach as in \cite{blath2016} with slight
modifications, but performs a more detailed calculation of the asymptotic coefficients. They are $\frac{1}{\overline{\lambda}_{0}}$ and $\frac{2}{\underline{\lambda}_{0}}$ for the lower and the upper bound, respectively.

\begin{thm}\label{th:bounds}~
\begin{enumerate}
	\item If the support of $\mu$ has an upper bound $\overline{\lambda}_{0}>0$,
then \begin{equation}
\liminf_{\substack{n\rightarrow \infty\\ ||m||_{TV}\rightarrow\infty}}\frac{E[T_{MRCA}(n,m)]}{\log(\log n+\frac{||m||_{TV}}{2c})}\geq \frac{1}{\overline{\lambda}_{0}}.	
\end{equation}
If $\mu$ is the Gamma distribution $\Gamma(a, b), a>0, b>0$, then
\begin{equation}
\liminf_{\substack{n\rightarrow \infty\\ ||m||_{TV}\rightarrow\infty}}\frac{E[T_{MRCA}(n,m)]}{\log(\log n+\frac{||m||_{TV}}{2c})}\geq \frac{b}{a}.	
\end{equation}
\item If the support of $\mu$ has an upper bound $\overline{\lambda}_{0}>0$ and a lower bound $\underline{\lambda}_{0}>0$,
then \begin{equation}
\limsup_{\substack{n\rightarrow \infty\\ ||m||_{TV}\rightarrow\infty}}\frac{E[T_{MRCA}(n,m)]}{\log(\log n+\frac{||m||_{TV}}{2c})}\leq \frac{2}{\underline{\lambda}_{0}}.	
\end{equation}
\end{enumerate}
\end{thm}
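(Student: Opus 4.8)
The plan is to read $E[T_{MRCA}(n,m)]$ as the expected hitting time of the absorbing state $(1,0)$ and to sandwich it between sub- and supersolutions of $\mathcal{L}h=-1$, following the Lyapunov/Dynkin scheme of \cite{blath2016}. The central object is the functional
\[
\psi(N,M):=\log N+\frac{\|M\|_{TV}}{2c},
\]
whose weight $\tfrac{1}{2c}$ is chosen precisely so that the downward drift of $\log N$ produced by coalescence is cancelled by the upward contribution of blocks entering the seed-bank. Writing $R:=\int_{(0,\infty)}\lambda\,M(d\lambda)$ for the total revival rate and using $-\tfrac{1}{N-1}\le\log\tfrac{N-1}{N}\le-\tfrac1N$ together with $0\le\log\tfrac{N+1}{N}\le\tfrac1N$, a direct computation over the three transition types (coalescence at rate $\binom{N}{2}$, dormancy at rate $cN$, revival at rate $R$) gives the two-sided drift estimate
\[
-2c-\frac{R}{2c}\ \le\ \mathcal{L}\psi\ \le\ \frac{1}{2}+\frac{R}{N}-\frac{R}{2c},
\]
so that $\psi$ decays, up to bounded corrections, at a rate governed by $R/\|M\|_{TV}$, i.e. by the typical revival rate of the dormant pool.

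For the lower bound I would compose $\psi$ with the concave increasing $G(x)=\tfrac{1}{\overline\lambda_0}\log(1+x)$, so that $g:=G(\psi)$ vanishes at $(1,0)$. Since every colour satisfies $\lambda\le\overline\lambda_0$ we have $R\le\overline\lambda_0\|M\|_{TV}\le 2c\,\overline\lambda_0\,\psi$, whence $\mathcal{L}\psi\ge-\overline\lambda_0\psi-2c$; inserting this into the concavity bound $G(\psi+\Delta)-G(\psi)\ge G'(\psi+\Delta)\,\Delta$ yields $\mathcal{L}g\ge-1-o(1)$ once $\psi$ is large, and Dynkin's formula (legitimate as $T_{MRCA}<\infty$ a.s.) gives $E[T_{MRCA}(n,m)]\ge g(n,m)=\tfrac{1}{\overline\lambda_0}\log(\log n+\tfrac{\|m\|_{TV}}{2c})\,(1-o(1))$. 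For the Gamma case there is no finite $\overline\lambda_0$, so I would replace the support bound on $R$ by the probabilistic estimate $R\le\|M\|_{TV}\,E_\nu[\lambda]=\|M\|_{TV}\tfrac{a}{b}$: the hypothesis that the dormant colours are i.i.d.\ $\nu$-distributed supplies this at $t=0$ by the law of large numbers, while for $t>0$ conditioning on ``still dormant'' only size-biases the colours toward smaller $\lambda$, so the per-block revival rate stays at most $E_\nu[\lambda]$ and the effective rate is $\tfrac{a}{b}$, producing the coefficient $\tfrac{b}{a}$.

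For the upper bound the same functional is used, but the task is reversed: I need a \emph{lower} bound on the coalescence rate $\binom{N}{2}$, which means showing that the reviving flux keeps enough active blocks present. When $\|M\|_{TV}$ is large, the balance $\binom{N}{2}\approx R\ge\underline\lambda_0\|M\|_{TV}$ forces $N$ to stay of order $\sqrt{\underline\lambda_0\|M\|_{TV}}$, and the conservative inequality $\binom{N}{2}\ge N^2/4$ then shows that the total count $B_t=N_t+\|M_t\|_{TV}$ decreases at rate at least $\tfrac{\underline\lambda_0}{2}B_t$. Integrating this — equivalently, verifying that $\tfrac{2}{\underline\lambda_0}\log(1+\psi)$ is a supersolution away from the small-$N$ states, with $E[T_{MRCA}]<\infty$ guaranteed by $\underline\lambda_0>0$ through Theorem \ref{th:mean} — gives $E[T_{MRCA}(n,m)]\le\tfrac{2}{\underline\lambda_0}\log(\log n+\tfrac{\|m\|_{TV}}{2c})\,(1+o(1))$. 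The factor $2$, rather than the heuristically sharp $1$, is exactly the price of the bound $\binom{N}{2}\ge N^2/4$.

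I expect the main obstacle in the two extreme regimes of $N$ that $\psi$ handles badly. When $N$ is very large (the initial Kingman phase), dormancy jumps of size $\approx\tfrac{1}{2c}$ occur at rate $cN$, so the second-order Jensen correction in $\mathcal{L}(G\circ\psi)$ is of order $N/\psi^2$ and blows up while $\|M\|_{TV}$ is still small; symmetrically, the supersolution for the upper bound fails at small-$N$, large-$\|M\|_{TV}$ states, where one revival raises $\log N$ substantially. In both cases the remedy is the same: the Kingman coalescent comes down from infinity and the revival flux restores active blocks so quickly that these regimes are exited in time $O(1)$, negligible against $\log(\log n+\tfrac{\|m\|_{TV}}{2c})\to\infty$. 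Making this rigorous — introducing a stopping time after which $N_t$ is controlled (say $N_t\lesssim\psi_t^2$ from above and $N_t\gtrsim\sqrt{\|M_t\|_{TV}}$ from below), showing it costs negligible time, and checking that $\psi_t$ has not moved from $\psi_0$ by more than a $(1+o(1))$ factor meanwhile — is the technical heart of the argument, and where the more detailed bookkeeping of constants beyond \cite{blath2016} is carried out.
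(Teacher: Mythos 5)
Your route is genuinely different from the paper's. You work with the Lyapunov functional $\psi(N,M)=\log N+\|M\|_{TV}/(2c)$ and Dynkin's formula applied to $G\circ\psi$, whereas the paper never estimates the generator of the full process: for the lower bound it couples with an \emph{accelerated} coalescent in which dormant blocks never return, shows by a Chebyshev estimate that the number $A^n$ of deactivations during the Kingman phase concentrates around $2c\log n$, and then bounds $T_{MRCA}$ below by the extinction time of a pure death process of $A^n+\|m\|_{TV}$ i.i.d.\ dormancy clocks, computing $\sum_i\int_0^\infty\bigl(\int_{(0,\infty)} e^{-\lambda t}\frac{\mu(d\lambda)}{c}\bigr)^i dt$ explicitly; for the upper bound it couples with a \emph{decelerated} coalescent in which coalescence is forbidden unless $N\geq\lceil(N+\|M\|_{TV})^{\alpha}\rceil$, and controls each stage by asymmetric-random-walk hitting times. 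Your drift computation for $\psi$ is correct, the constants you extract are the right ones, and the approximate conservation of $\psi$ through the Kingman phase that you need is exactly the statement $A^n\approx 2c\log n$ that the paper proves --- so the two approaches meet at the same key estimate. But the proposal defers precisely the steps that carry the proof.

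Concretely, three gaps. (i) Lower bound: the pointwise inequality $\mathcal{L}(G\circ\psi)\geq-1-o(1)$ fails in the large-$N$ regime (as you note, the Jensen correction is of order $N/\psi^2$), so the optional-stopping argument must be run in integrated form and you must actually prove that the accumulated correction along the Kingman phase is $o(\log\psi_0)$; this is not a removable technicality but the analogue of the paper's Chebyshev concentration of $A^n$. (Also $\psi$ is $-\infty$ at the reachable state $N=0$.) (ii) Gamma case: the bound $R\leq\frac{a}{b}\|M\|_{TV}$ is false pointwise and $(N_t,\|M_t\|_{TV})$ is not Markov, so ``the effective rate is $a/b$'' requires a genuine exchangeability/conditional-expectation argument; the paper instead computes directly with the Laplace transform $(1+t/b)^{-a}$, and Jensen's inequality $\int e^{-\lambda t}\nu(d\lambda)\geq e^{-t E_{\nu}[\lambda]}$ is the clean way to see why $1/E_{\nu}[\lambda]=b/a$ appears. (iii) Upper bound: the assertion that $N_t$ stays of order $\sqrt{\underline{\lambda}_0\|M_t\|_{TV}}$ is the entire difficulty; a supersolution argument needs either a uniform-in-time proof of this concentration or a monotone comparison that removes the need for it --- the decelerated coalescent is exactly such a comparison, and its factor $2$ arises from summing two harmonic series (waiting for a coalescence once eligible, and rebuilding the active pool afterwards), not from $\binom{N}{2}\geq N^2/4$. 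As written, the proposal is a sound heuristic with the correct architecture, but the acknowledged ``technical heart'' is where the proof actually lives.
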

The asymptotic upper bound of $E[T_{MRCA}(n,m)]$ for the Gamma distribution remains an open problem.
\end{itemize}

The remainder of this paper is organized as follows: In Section 2, we review the definition of the \textit{continuum seed-bank coalescent}, then we discuss some basic properties such as duality, exchangeability, limiting distribution of the ancestral line, and comparisons of $E[T_{MRCA}]$. As byproducts, we give a graphical construction of the coalescent in Subsection 2.2, prove Theorem \ref{th:mean} and find out the limiting distribution of the \textit{continuum seed-bank diffusion} in Subsection 2.3 (see Proposition \ref{prop:fixation}). We also propose some interesting open problems in Subsection 2.4 (see Remark \ref{rem:open}). In Section 3 and Section 4, we introduced the \textit{accelerated coalescent} and the \textit{decelerated coalescent}, respectively, following the same approach as in\cite{blath2016}. Then, we prove Theorem \ref{th:coming} and Theorem \ref{th:bounds}-1 in Section 3, and prove Theorem \ref{th:bounds}-2 in Section 4.
\section{Definition and basic properties}

In this section, we review the definition of the \textit{continuum seed-bank coalescent} (see Section 6 of \cite{jiao2023wright}) and give some basic properties such as exchangeability, limiting distribution of the ancestral line, and comparisons of $E[T_{MRCA}]$.

For $K\in\mathbb{N}$, let $\mathcal{P}_{K}$ be the set of partitions of $\{1,\cdots,K\}$. Define the space of marked partitions as 
\begin{equation}
\mathcal{P}_{K}^{f}=\{(P_{K}, f)|P_{K}\in \mathcal{P}_{K}, f\in [0,\infty)^{|P_{K}|}\},	
\end{equation}
where $f$ is the \textit{flag}, $0$ represents \textit{active}, $\lambda\in (0,\infty)$ represents \textit{dormant} with transition rate $\lambda$, and $|\cdot|$ denotes the number of blocks. For example, when $K=6$, an element $\pi=(P_{6},f)$ may be $\{\{1\}^{0}\{2\}^{0.1}\{5\}^{1}\{3,4\}^{10}\{6\}^{0}\}$. The state space $\mathcal{P}^{f}_{K}$ is locally compact and Polish since it can be regarded as a closed subset of $\{1,2,\cdots,B_{K}\}\times[0,\infty)^{K}$, where $B_{K}$ is the $K$-th Bell number, i.e., the number of different ways to partition a set with $K$-elements.
\begin{defn}\label{defn:coalescent}
For the prescribed finite measure $\mu$ on $((0,\infty),\mathcal{B}(0,\infty))$, where $\mathcal{B}(0,\infty)$ is the Borel $\sigma$-algebra, the continuum seed-bank $K$-coalescent process $\{\Pi^{K}_{t}\}_{t\geq 0}$ is a $\mathcal{P}_{K}^{f}$-valued Markov jump process with initial value $\Pi^{K}_{0}$ composed of $K$ singletons:
\begin{equation}\label{eq:K}
\pi \mapsto \pi^{\prime} \text { at rate }\left\{\begin{array}{cl}
\mu(B), & \pi \leadsto \pi^{\prime}, \text{a}~0~\text{becomes}~\lambda\in B,\text{ for } B\in\mathcal{B}(0,\infty),\\
\lambda, & \pi \leadsto \pi^{\prime}, \text{a}~\lambda ~\text{becomes}~0,\\
1, & \pi \sqsupset \pi^{\prime},
\end{array}\right.
\end{equation}
where $\pi\leadsto\pi^{\prime}$ denotes that $\pi^{\prime}$ is obtained by changing the flag of one block of $\pi$, and $\pi\sqsupset \pi^{\prime}$ denotes that  $\pi^{\prime}$ is obtained by merging two $0$-blocks in $\pi$.
For example, when $K=6$, an event $\pi\sqsupset \pi^{\prime}$ may be
\[\{\{1\}^{0}\{2\}^{0.1}\{5\}^{1}\{3,4\}^{10}\{6\}^{0}\}\sqsupset \{\{1,6\}^{0}\{2\}^{0.1}\{5\}^{1}\{3,4\}^{10}\},\]
and an event $\pi\leadsto\pi^{\prime}$ may be
\[\{\{1\}^{0}\{2\}^{0.1}\{5\}^{1}\{3,4\}^{10}\{6\}^{0}\}\leadsto 
\{\{1\}^{0}\{2\}^{0.1}\{5\}^{1}\{3,4\}^{10}\{6\}^{100}\}.
\]

\end{defn}
If the measure $\mu$ has a continuous part, i.e., non-discrete, then the process $\{\Pi^{K}_{t}\}_{t\geq 0}$ is \textit{continuous in state}, which is the origin of the term \textit{continuum}.

\begin{defn}\label{defn:block}
The block counting process (counting the number of active/dormant blocks in partitions) of the continuum seed-bank $K$-coalescent is the following Markov jump process $\{(N_{t}, M_{t})\}_{t\geq 0}$ with initial value $(N_{0}, M_{0})$ such that 
$N_{0}+||M_{0}||_{TV}=K$, 
\begin{equation}\label{eq:char1}
(n, m) \mapsto (n^{\prime}, m^{\prime}) \text { at rate }\left\{\begin{array}{cl}
n\mu(B), & (n^{\prime}, m^{\prime})=(n-1, m+\delta_{\lambda}), \lambda\in B, \text{for}~B\in\mathcal{B}(0,\infty),\\
\lambda m(\{\lambda\}), & (n^{\prime}, m^{\prime})=(n+1, m-\delta_{\lambda}),\\
C^{2}_{n}, & (n^{\prime}, m^{\prime})=(n-1, m),
\end{array}\right.
\end{equation}
where $n\in \mathbb{N}_{0}:=\{0,1,\cdots\}$, $m$ is a finite integer-valued measure on $(0,\infty)$, and $C^{2}_{n}=\frac{n(n-1)}{2}$. Since $m$ can be represented as a weighted sum of Dirac measures (see e.g. Theorem 2.1.6.2 in \cite{kadets2018course}), i.e., $m(d\lambda)=\sum\limits_{i=1}^{K_{0}}m_{i}\delta_{\lambda_{i}}(d\lambda)$ for $K_{0}\in \mathbb{N}_{0}$, $m_{i}\in\mathbb{N}:=\{1,2,\cdots\}$ and $\lambda_{i}\in (0,\infty)$, the state space for $(n,m)$ will be denoted by $\mathbb{N}_{0}\times \bigoplus\limits_{(0,\infty)}\mathbb{N}_{0}$, where the direct sum $\bigoplus\limits_{(0,\infty)}\mathbb{N}_{0}$ is the subspace of $\mathbb{N}_{0}^{(0,\infty)}$ for whose elements there are finitely many non-zero components.	
\end{defn}
Let $\{\tilde{M}_{k},k\in \mathbb{N}_{0}\}$ be a discrete-time time-homogeneous Markov chain in $\mathbb{N}_{0}\times \bigoplus\limits_{(0,\infty)}\mathbb{N}_{0}$ with initial distribution $\nu$, and its transition kernel is 
\begin{equation}
\mu((n, m), \Gamma)=\left\{\begin{array}{l}
\frac{n \mu(B)}{c_{n,m}}, \Gamma=\left\{\left(n-1, m+\delta_\lambda\right), \lambda \in B\right\}, \\
\frac{\lambda m(\{\lambda \})}{c_{n,m}}, \Gamma=\left(n+1, m-\delta_\lambda\right), \\
\frac{C_n^2}{c_{n,m}}, \Gamma=(n-1, m),
\end{array}\right.
\end{equation}
where $c_{n,m}=cn+\int_{(0,\infty)} \lambda m(d\lambda)+C^2_n$. Then, $\{(N_{t}, M_{t})\}_{t\geq 0}$ can be represented as 
\begin{eqnarray}\label{eq:constructed}
(N_{t}, M_{t})=\sum_{k=0}^{\infty}\tilde{M}_{k}I_{\{[\sum\limits_{j=0}^{k-1}\frac{\Delta_{j}}{c_{Y_{j}}}, \sum\limits_{j=0}^{k}\frac{\Delta_{j}}{c_{Y_{j}}})\}},	
\end{eqnarray}
where $c_{Y_{j}}=c_{n,m}$ if $Y_{j}=(n,m)$, $\{\Delta_{j}\}_{j\in\mathbb{N}_{0}}$ are mutually independent and independent of $\{Y_{j}\}_{j\in\mathbb{N}_{0}}$, and they are exponentially distributed with parameter $1$. The \textit{continuum seed-bank coalescent} can be constructed in a similar way, but it is not needed in this paper.

Finally, since the state space $\mathcal{P}_{K}^{f}$ is Polish, by a projective limit argument, the existence and uniqueness in distribution of the continuum seed-bank coalescent process $\{\Pi^{\infty}_{t}\}_{t\geq 0}$ taking values in $\mathcal{P}^{f}_{\infty}$ is given by the Proposition 6.5. in \cite{jiao2023wright}, which allows us to consider the \textit{coming down from infinity} property when $N_{0}+||M_{0}||_{TV}$ is countably infinite.

\subsection{Duality}

The dual process and a scaling limit interpretation of the \textit{continuum seed-bank coalescent} have been provided by Theorem 1.3 and Proposition 6.6 in \cite{jiao2023wright}. Here, we briefly review the duality relation, as the limiting distribution of the dual process, which will be refereed to as the 
\textit{continuum seed-bank diffusion}, can be derived as the byproduct of Theorem \ref{th:mean} in the present paper, see Proposition \ref{prop:fixation} on \textit{fixation in law}. 

\begin{defn}
Let $D:=[0,1]\times \left\{y:(0, \infty) \rightarrow \mathbb{R} \text { is Borel measurable, and } 0 \leq y \leq 1, \mu \text {-a.e.}\right\}$ equipped with the metric on $\mathbb{R} \times L^{1}((0,\infty),\mathcal{B}(0,\infty),\mu; \mathbb{R})$, and we assume that $\int_{(0,\infty)}\lambda \mu(d\lambda)<\infty$, the \textit{continuum seed-bank diffusion} $\{X_{t}, Y_{t}\}_{t\geq 0}$ is the unique strong solution to the following infinite-dimensional stochastic differential equation (SDE):
\begin{equation}\label{eq:inf}
\left\{\begin{array}{l}
dX_{t}=\int_{(0,\infty)} Y_{t}(\lambda) \mu(d \lambda) d t-c X_{t} d t+\sqrt{X_{t}\left(1-X_{t}\right)} d W_{t}, t>0,\\
dY_{t}(\lambda)=\lambda\left(X_{t}-Y_{t}(\lambda)\right) dt, \lambda\in(0,\infty), t>0, \\
\left(X_{0}, Y_{0}(\lambda)\right)=(x, y(\lambda))\in D.
\end{array}\right.
\end{equation}	
\end{defn}

For the well-posedness of (\ref{eq:inf}), see Theorem 1.1 in \cite{jiao2023wright}. If the initial value does not depend on $\mu$, then the \textit{continuum seed-bank diffusion} takes values in 
\begin{equation}
\overline{D}:=[0,1]\times \left\{y:(0, \infty) \rightarrow \mathbb{R} \text { is Borel measurable, and } 0 \leq y\leq 1\right\},	
\end{equation}
which is equipped with the metric on $\mathbb{R} \times L^{\infty}((0,\infty),\mathcal{B}(0,\infty);\mathbb{R})$. It is well-known that the dual space of $L^{\infty}$ can be viewed as the space of finitely additive set functions on $((0,\infty),\mathcal{B}(0,\infty))$ equipped with the total variation norm $||\cdot||_{TV}$. In this case, according to Remark 6.4 in \cite{jiao2023wright}, if we adopt the $\sigma$-algebra generated by all integer-valued finite measures $m$ on $L^{\infty}$, then the process $Y$ is an $L^{\infty}$-valued adapted process. 

By Theorem 1.3 in \cite{jiao2023wright}, the \textit{continuum seed-bank diffusion} and the \textit{continuum seed-bank coalescent} are dual to each other through the following relation:
\begin{equation}
E[F((X_t, Y_t),(N_{0}, M_{0}))]=E[F((X_{0}, Y_{0}),(N_{t}, M_{t}))], t\geq 0,
\end{equation}
for the dual functional 
\begin{equation}\label{dualfunction}
F[(x, y(\lambda)),(n,m(d\lambda))]:=x^{n}e^{\int_{(0,\infty)} ln y(\lambda)m(d\lambda)}.
\end{equation}
In particular, if $(X_{0}, Y_{0})=(x, y)$ and $(N_{0}, M_{0})=(n, \sum\limits_{i=1}^{K_{0}}m_{i}\delta_{\lambda_{i}})$, then we have
\begin{equation}\label{determining}
E[X_t^n\prod_{i=1}^{K_{0}}Y_{t}(\lambda_{i})^{m_{i}}]=E[x^{N_{t}}\prod_{i=1}^{K_{t}}y(\Lambda_{i,t})^{M_{i,t}}], t\geq 0,
\end{equation}
for $M_{t}=\sum\limits_{i=1}^{K_{t}}M_{i,t}\delta_{\Lambda_{i,t}}.$
By Remark 6.4 in \cite{jiao2023wright}, the distribution of $(X_{t}, Y_{t})$ on $\mathbb{R}\times L^{\infty}$ is uniquely determined by (\ref{determining}).

\subsection{Exchangeability}\label{sub:ex}

We now give a graphical construction of the \textit{continuum seed-bank coalescent}, 
imitating the construction for the Kingman coalescent (see e.g. \cite{berestycki2009recent}).

For a sample of size $K$ listed by $\{1, 2, \cdots, K\}$, at the beginning we have $K$ independent ancestral lines for these individuals since the initial partition $\Pi^{K}_{0}$ is composed of $K$ singletons. Here $K$ can be countably infinite. Each ancestral line follows an \textit{alternating renewal process} (see e.g. \cite{cox1962renewal}), i.e., the individual switches back and forth between \textit{active} and \textit{dormant} states. According to the Definition \ref{defn:coalescent}, the length of active period is exponentially distributed with parameter $c:=\mu(0,\infty)$, the length of dormant period (dormancy time) follows a distribution with CDF $K(t)=\int_{(0,\infty)}(1-e^{-\lambda t})\frac{\mu(d\lambda)}{c}$, and them are all independent. Note that once two blocks coalesce, the corresponding individuals will follow the same ancestral line in the future, but ancestral lines of different blocks are always independent.

For $t\geq 0$, we label every block of the partition $\Pi^{K}_{t}$ by the \textit{position} and \textit{flag} of its lowest elements in the list. In other words, for every $1\leq i\leq K$, we define a label process $\{X_{t}(i)\}_{t\geq 0}$ which takes values in $\{1, 2, \cdots, K\}\times [0, \infty)$ such that $X_{t}(i)=(j, f_{t}(j))$ for some $j\leq i$ implies at time $t$, the lowest element of the block containing $i$ is the $j$-th individual and the flag is $f_{t}(j)$ indicating the state of $j$. Then, at time $0$, we have $X_{0}(i)=(i, f_{i})$ for the initial flags $\mathbf{f}=\{f_{i}\}_{1\leq i\leq K}$. Clearly, as the coalescent process evolves, $X_{t}(i)=(j, f_{t}(j))$ only jumps to a lower position, say $l<j$, when $f_{t}(j)=0$, and at the same time the flag of $X_{t}(l)$ should also be $0$.

Now, we assign independent Possion clocks $\{P_{i,j}\}_{1\leq i<j\leq K}$ with intensity $1$ to each pair of these $K$ individuals. Due to the memoryless property of the exponential distribution, when the Possion clock
$P_{i,j}$ rings, and at the same time $t_{0}$, the flags of both $X_{t_{0}}(i)$ and $X_{t_{0}}(j)$ are $0$ but their positions are different, then we know that a new coalescence is occurring. Given $2\leq n \leq K$, in order to define $\{X_{t}(i)\}_{1\leq i\leq n}$, we only need to consider finitely many Possion clocks $\{P_{i,j}\}_{1\leq i<j\leq n}$ since other clocks do not have any effect. Define 
\[T_{1}=\inf\{t>0: P_{i,j} \text{ rings at } t \text{ and } X_{t}(i)=(p_{i}, 0), X_{t}(j)=(p_{j}, 0) \text{ for } p_{i}\neq p_{j}, \text{ for some } 1\leq i<j\leq n\},  
\]
and recursively, for $k\geq 2$, define
\[T_{k}=\inf\{t>T_{k-1}: P_{i,j} \text{ rings at } t \text{ and } X_{t}(i)=(p_{i}, 0), X_{t}(j)=(p_{j}, 0) \text{ for } p_{i}\neq p_{j}, \text{ for some } 1\leq i<j\leq n\}. 
\]
Then $\{T_{k}\}_{k\in\mathbb{N}}$ is the sequence of coalescence times. If $t<T_{1}$, we already know that $X_{t}(i)=X_{0}(i)=(i, f_{t}(i))$. For $k\geq 1$, if $\{X_{t}(i)\}_{1\leq i\leq n}$ is defined for all $t<T_{k}$, and we assume 
that the Possion clock $P_{i_{k}, j_{k}}$ rings at $T_{k}$ (only one clock except on a zero-probability set). Let $I_{k}=\{1\leq i\leq n: X_{T_{k}-}(i)=(j_{k}, 0)\}$ which is the set of individuals whose label changes at time $T_{k}$. By the definition of $T_{k}$, $I_{k}$ is not empty. Define $X_{t}(i)=X_{T_{k}-}(i)$ if $i\notin I_{k}$ for all $t\in [T_{k}, T_{k+1})$, and $X_{t}(i)=(i_{k}, f_{t}(i_{k}))$ if  $i\in I_{k}$ for all $t\in [T_{k}, T_{k+1})$. 

Finally, with all label processes $\{X_{t}(i), t\geq 0\}_{1\leq i\leq K}$ well-defined, the partition $\Pi^{K}_{t}$ is given by
\[i \text{ and } j\text{ are in the same block if and only if } X_{t}(i)=X_{t}(j).\]

By the above construction, we can easily see that the \textit{continuum seed-bank coalescent} is exchangeable, i.e., for any $t\geq 0$, $\Pi^{K}_{t}$ is an exchangeable random marked partition which is defined as follows:

\begin{defn}
An exchangeable random marked partition $\Pi$ is a random marked partition of $\{1,2,\cdots, K\}$, $K$ can be countably infinite, whose distribution is invariant under any permutation $\sigma$ of 
$\mathbb{N}$ with finite support which only permutes finitely many positions. In the resulting partition $\Pi_{\sigma}$, $\sigma(i)$ and $\sigma(j)$ are in the same block if and only if $i$ and $j$ are in the same block of $\Pi$. 
\end{defn}

\begin{prop}
The \textit{continuum seed-bank coalescent} $\{\Pi^{\infty}_{t}\}_{t\geq 0}$ is exchangeable.
\end{prop}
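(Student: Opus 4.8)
The plan is to exploit the symmetry of the graphical construction just given, in the same spirit as the classical argument for the Kingman coalescent. The construction realizes $\Pi^{K}_{t}$ as a deterministic measurable functional $\Phi_{t}$ of three families of ingredients: the alternating renewal processes $\{R_{i}\}_{1\leq i\leq K}$ governing the active/dormant timeline of each initial lineage, the Poisson pair-clocks $\{P_{i,j}\}_{1\leq i<j\leq K}$, and the initial flags $\mathbf{f}$. These ingredients are i.i.d. across individuals (the $R_{i}$), i.i.d. across pairs (the $P_{i,j}$), and the initial flags, being all active for the coalescent started from singletons, are trivially exchangeable; consequently their joint law is invariant under relabeling by any finite-support permutation $\sigma$. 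First I would record this invariance, so that running $\Phi_{t}$ on the relabeled ingredients produces a marked partition with the same law as $\Pi^{K}_{t}$.

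Next I would verify that relabeling the ingredients transports the block structure exactly by $\sigma$. In the relabeled system the individual $\sigma(i)$ inherits the renewal process $R_{i}$ and, for each $j$, the clock $P_{i,j}$; hence its active/dormant timeline and the instants at which it attempts to coalesce mirror those of $i$ in the original system. It follows that the coalescence times $T_{k}$ and the merging pairs correspond under $\sigma$, so that $\sigma(i)$ and $\sigma(j)$ lie in the same block of the relabeled output if and only if $i$ and $j$ lie in the same block of $\Pi^{K}_{t}$. Thus, at the level of the underlying unmarked partition, the construction is genuinely $\sigma$-equivariant.

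The main obstacle is the flags. In the construction the flag of a block is read off from its lowest-element ``spine'', and the lowest-element convention is not symmetric under $\sigma$: after relabeling, a given block may acquire a different spine, whose flag is governed by a different (albeit identically distributed) renewal process, so $\Phi_{t}$ is not exactly $\sigma$-equivariant on the marked partition. I would resolve this by conditioning on the genealogy. Because every merge can occur only while the two blocks involved are active, the memorylessness of the $\mathrm{Exp}(c)$ active periods together with the i.i.d. law of the $R_{i}$ implies that, given the genealogy, the flag of each block at time $t$ is a fresh renewal continuation started from the active state at that block's most recent merge, and that distinct blocks carry conditionally independent flags. This conditional law of the flags depends only on the genealogy---which is equivariant by the previous step---and not on which element happens to be the spine; it is therefore $\sigma$-invariant. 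Combining the equivariant block structure with the $\sigma$-invariant conditional flag law yields $\Pi^{K}_{t}\stackrel{d}{=}(\Pi^{K}_{t})_{\sigma}$, which is exchangeability for finite $K$.

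Finally, for the infinite coalescent I would pass to the limit through consistency: by the projective-limit construction the restriction of $\Pi^{\infty}_{t}$ to $\{1,\dots,K\}$ is $\Pi^{K}_{t}$, so exchangeability of every finite restriction forces exchangeability of $\Pi^{\infty}_{t}$. I note that one can sidestep the spine subtlety altogether by an infinitesimal argument: $\{\Pi^{K}_{t}\}$ is the Markov jump process with the rates of Definition \ref{defn:coalescent}, and each of those rates depends only on the flags and the block structure and not on the labels, so the generator commutes with the $S_{K}$-action; together with the exchangeable initial condition, uniqueness of the Markov semigroup gives exchangeability at once. I would use the graphical argument as the primary route, however, since it is the one prepared by the construction above.
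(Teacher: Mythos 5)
Your proposal takes essentially the same route as the paper: both rest on the graphical construction and on the observation that the ingredients (i.i.d.\ renewal processes for the ancestral lines, i.i.d.\ Poisson pair-clocks, identical initial flags) have a joint law invariant under finite-support permutations; the paper's own proof is in fact terser and simply invokes this homogeneity for swaps. You are more careful than the paper in isolating the lowest-element (``spine'') convention as the point where naive equivariance breaks, and your passage to $K=\infty$ via consistency of the finite restrictions is the standard completion that the paper leaves implicit. One inaccuracy to fix: your second paragraph claims the \emph{unmarked} block structure is pathwise $\sigma$-equivariant, but it is not --- whether a ring of $P_{i,j}$ triggers a merge depends on the flags of the two blocks, and those flags are read from the spines, so after the first merge the genealogy itself can differ pathwise between the original and the relabeled system; the obstacle is not confined to the marks. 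The repair is exactly the material you already supply: one must argue distributionally for the pair (genealogy, flags) jointly, e.g.\ by induction over merge times using the memorylessness of the $\mathrm{Exp}(c)$ active periods and the i.i.d.\ renewal laws, or, most cleanly, by your generator argument, since the rates in Definition~\ref{defn:coalescent} are label-blind and the initial condition is exchangeable. With that adjustment your proof is complete and arguably more rigorous than the paper's.
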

\begin{proof}
We only need to consider swaps. For example, if individuals $1$ and $2$ are swapped, then they flags indicating the state are swapped at the same time since the flags are not permuted. If we perform this swap at time $0$, then the overall distribution of the label processes $\{X_{t}(i), t\geq 0\}_{1\leq i\leq K}$ are not changed since individuals are homogeneous with the same initial flag, and the Possion clocks are also homogeneous. Therefore, the distribution of $\Pi^{K}_{t}$ for any $t\geq 0$ does not change.
\end{proof}

\subsection{Limiting distribution of the ancestral line}

In order to prove Theorem \ref{th:mean}, we study the limiting distribution of the ancestral line. The ancestral line of individual $i$ refers to the $flag$ component of the label process $\{X_{t}(i)\}_{t\geq 0}$ defined in Subsection 
\ref{sub:ex}, whose distribution is not affected by the coalescent since all individuals involved in a coalescence event are active at that time. As mentioned in Subsection \ref{sub:ex}, the ancestral line of an individual follows an \textit{alternating renewal process} $\{R_{t}\}_{t\geq 0}$ determined by the initial flag. If the initial flag is $0$, i.e., active, then by the classical renewal theory (see e.g. \cite{cox1962renewal}), we have
\[\lim_{t\rightarrow \infty}P(R^{\{0\}}_{t} \text{ is active})=\frac{\frac{1}{c}}{\frac{1}{c}+\int_{(0,\infty)}\frac{1}{\lambda}\frac{\mu(d\lambda)}{c}}=\frac{1}{1+\int_{(0,\infty)}\frac{1}{\lambda}\mu(d\lambda)},\]
where $c$ is the total mass of $\mu$; If the initial flag is $\lambda$, i.e., dormant with transition rate $\lambda$, then 
\[\lim_{t\rightarrow \infty}P(R^{\{\lambda\}}_{t} \text{ is active})=\lim_{t\rightarrow \infty}\int_{0}^{t}P(R^{\{0\}}_{t-s} \text{ is active})\lambda e^{-\lambda s}ds=\frac{1}{1+\int_{(0,\infty)}\frac{1}{\lambda}\mu(d\lambda)}.\]

Moreover, under the condition that $\int_{(0,\infty)}\frac{1}{\lambda}\mu(d\lambda)<\infty$, we have the following result:

\begin{prop}\label{prop:limit}
The limiting distribution of the ancestral line is $\frac{1}{1+\int_{(0,\infty)}\frac{1}{\lambda}\mu(d\lambda)}\delta_{0}+\frac{1}{1+\int_{(0,\infty)}\frac{1}{\lambda}\mu(d\lambda)}\frac{1}{\lambda}.\mu(d\lambda)$, where $\delta_{0}$ is the Dirac measure at $0$, and $\frac{1}{\lambda}.\mu(d\lambda)(B)=\int_{B}\frac{1}{\lambda}\mu(d\lambda)$ for any $B\in\mathcal{B}(0,\infty)$.
\end{prop}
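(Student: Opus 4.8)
The plan is to recognize the ancestral line $\{R_t\}_{t\ge 0}$ as a regenerative process and to read off its limiting distribution as the normalized expected occupation measure over a single renewal cycle. Write $Z_t\in[0,\infty)$ for the flag at time $t$, so $Z_t=0$ encodes the active state and $Z_t=\lambda>0$ encodes the dormant state with transition rate $\lambda$; by the remark in Subsection~\ref{sub:ex} this process is unaffected by the coalescent. Using the alternating-renewal description, declare a regeneration at each instant the line becomes active; one cycle then consists of an active period of length $A\sim\mathrm{Exp}(c)$ followed by a dormant period that first samples a rate $\lambda$ from $\nu=\mu/c$ and then lasts an independent $\mathrm{Exp}(\lambda)$ time. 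During the active period $Z_s\equiv 0$, and during the dormant period $Z_s\equiv\lambda$.

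First I would record the two structural facts needed to invoke the renewal limit theorem. The cycle length $\tau=A+D$ is non-lattice, since both $A$ and (conditionally on $\lambda$) the dormant length are absolutely continuous; and its mean is finite precisely under the standing hypothesis, namely $E[\tau]=\tfrac1c+\int_{(0,\infty)}\tfrac1\lambda\,\nu(d\lambda)=\tfrac1c\big(1+\int_{(0,\infty)}\tfrac1\lambda\,\mu(d\lambda)\big)<\infty$. Then the limit theorem for regenerative processes gives, for every Borel $A\subseteq[0,\infty)$,
\[
\lim_{t\to\infty}P(Z_t\in A)=\frac{E\big[\int_0^\tau \mathbf{1}\{Z_s\in A\}\,ds\big]}{E[\tau]},
\]
and, being a delayed regenerative process, the limit does not depend on the initial flag, consistent with the two computations already displayed for $R^{\{0\}}$ and $R^{\{\lambda\}}$.

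Next I would compute the two occupation integrals. For $A=\{0\}$ the only contribution is the active period, so $E\big[\int_0^\tau\mathbf{1}\{Z_s=0\}ds\big]=E[A]=\tfrac1c$; dividing by $E[\tau]$ recovers the atom $\tfrac{1}{1+\int 1/\lambda\,\mu(d\lambda)}\delta_0$, matching the excerpt. For $A=B\subseteq(0,\infty)$ the contribution comes only from a dormant period whose sampled rate lands in $B$; conditioning on that rate and using $E[\mathrm{Exp}(\lambda)]=1/\lambda$ yields $E\big[\int_0^\tau\mathbf{1}\{Z_s\in B\}ds\big]=\int_B\tfrac1\lambda\,\nu(d\lambda)=\tfrac1c\int_B\tfrac1\lambda\,\mu(d\lambda)$. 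Dividing by $E[\tau]$ gives $\tfrac{1}{1+\int 1/\lambda\,\mu(d\lambda)}\int_B\tfrac1\lambda\,\mu(d\lambda)$, which is exactly the density part $\tfrac{1}{1+\int 1/\lambda\,\mu(d\lambda)}\,\tfrac1\lambda.\mu(d\lambda)$. Since $\{0\}$ together with the Borel subsets of $(0,\infty)$ generate the Borel sets of $[0,\infty)$, summing the two pieces and checking total mass $1$ completes the identification.

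The step that I expect to require the most care is the justification of the regenerative limit theorem in this marked setting: one must verify that tracking the continuum-valued rate $\lambda$ during dormancy is legitimate, i.e. that the occupation reward $s\mapsto \mathbf{1}\{Z_s\in B\}$ leads to a directly Riemann integrable term in the associated renewal equation and that the non-lattice hypothesis genuinely upgrades Ces\`aro convergence to a true limit. An equivalent, more hands-on route — paralleling the excerpt's treatment of the active probability — is to write a renewal equation for $t\mapsto P(Z_t\in B\mid\text{initial flag})$ by conditioning on the first regeneration and then apply the key renewal theorem directly; I would use whichever presentation keeps the bookkeeping of the resampled rate cleanest.
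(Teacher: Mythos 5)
Your proof is correct, but it takes a genuinely different route from the paper's. The paper works with the absolute distribution $(p_t,q(t,d\lambda))$ of the ancestral line, writes down the Kolmogorov forward equation, solves the dormant component by variation of constants to get the explicit formula $q(t,d\lambda)=e^{-\lambda t}q(0,d\lambda)+\int_0^t e^{-\lambda s}p(t-s)\,ds.\mu(d\lambda)$, and then passes to the limit by dominated convergence, feeding in the limit $\lim_t p(t)=\bigl(1+\int\frac{1}{\lambda}\mu(d\lambda)\bigr)^{-1}$ that was already obtained from classical alternating renewal theory just before the proposition. You instead treat the whole flag process as a regenerative process and identify the limit for every Borel set directly as the normalized expected occupation measure over one cycle, computing $E[\int_0^\tau \mathbf{1}\{Z_s=0\}\,ds]=1/c$ and $E[\int_0^\tau\mathbf{1}\{Z_s\in B\}\,ds]=\frac1c\int_B\frac1\lambda\,\mu(d\lambda)$ against $E[\tau]=\frac1c\bigl(1+\int\frac1\lambda\,\mu(d\lambda)\bigr)$. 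Your computations are right, the non-lattice and finite-mean hypotheses are exactly the standing assumption $\int\frac1\lambda\,\mu(d\lambda)<\infty$, and the direct-Riemann-integrability issue you flag is genuine but easily discharged here (the cycle rewards $t\mapsto P(Z_t\in A,\ \tau>t)$ are continuous and dominated by the nonincreasing integrable tail $P(\tau>t)$). What your approach buys is a uniform, self-contained treatment of the atom at $0$ and the absolutely continuous part on $(0,\infty)$ without having to formulate or justify a measure-valued ODE, and it delivers setwise convergence immediately; what the paper's approach buys is an explicit finite-$t$ formula for $q(t,d\lambda)$, which is reused later (the measure $P(M_t=1,T\le t,d\lambda)$ in the proof of Proposition \ref{prop:fixation} is identified with this $q(t,d\lambda)$), so if you adopt your route you should note that the finite-time representation is still available when needed downstream.
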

\begin{proof}
Let $(p_{t}, q(t, d\lambda))$ be the absolute distribution of the ancestral line at time $t$. By the Kolmogorov forward equation, it is well-known that $(p_{t}, q(t, d\lambda))$ satisfies the following ordinary differential equation:
\begin{equation*}
\left\{\begin{array}{l}
\frac{dp(t)}{dt}=-cp(t)+\int_{(0,\infty)}\lambda q(t, d\lambda), t>0,\\
\frac{q(t, B)}{dt}=-\int_{B}\lambda q(t, d\lambda)+p(t)\mu(B), B\in\mathcal{B}(0,\infty), t>0,\\
p_{t}+||q(t, d\lambda)||_{TV}=1.
\end{array}\right.
\end{equation*}	
By the variation of constants, in the form of measures, we have
\[q(t, d\lambda)=e^{-\lambda t}q(0,d\lambda)+\int_{0}^{t}e^{-\lambda s}p(t-s)ds.\mu(d\lambda).\]
Letting $t\rightarrow \infty$, by the dominated convergence theorem, then we have
$\lim\limits_{t\rightarrow \infty}q(t, d\lambda)=\frac{1}{1+\int_{(0,\infty)}\frac{1}{\lambda}\mu(d\lambda)}\frac{1}{\lambda}.\mu(d\lambda)$. Here the convergence is for any $B\in\mathcal{B}(0,\infty)$, which is stronger than the weak convergence.
\end{proof}

The following Lemma about the geometric random variable is intuitive:
\begin{lemma}\label{lem:geo}
For a sequence of independent Bernoulli trails, if the success probability of the $n$-th trial $p_{n}\geq p>0$ ($0<p_{n}\leq p$), then the expected time to the first success is less (more) than that of a geometric random variable with parameter $p$.
\end{lemma}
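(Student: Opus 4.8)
The plan is to use the tail-sum representation of the expectation of a non-negative integer-valued random variable and then to compare the tails term by term against those of the geometric distribution. Let $T$ denote the index of the first success among the trials. Since the trials are independent, the event $\{T>n\}$ is exactly the event that the first $n$ trials all fail, so its probability factorizes as
\begin{equation*}
P(T>n)=\prod_{k=1}^{n}(1-p_{k}),
\end{equation*}
with the convention that the empty product equals $1$. Invoking the standard tail formula $E[T]=\sum_{n=0}^{\infty}P(T>n)$, I would write the expected time to the first success as
\begin{equation*}
E[T]=\sum_{n=0}^{\infty}\prod_{k=1}^{n}(1-p_{k}),
\end{equation*}
and record that for a geometric random variable $G$ with parameter $p$ the same computation gives $P(G>n)=(1-p)^{n}$ and hence $E[G]=\sum_{n=0}^{\infty}(1-p)^{n}=\frac{1}{p}$.

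With both quantities written as comparable series, the comparison becomes immediate. In the first case, $p_{k}\geq p>0$ gives $1-p_{k}\leq 1-p$ for every $k$, so each finite product satisfies $\prod_{k=1}^{n}(1-p_{k})\leq(1-p)^{n}$; summing over $n$ yields $E[T]\leq E[G]=\frac{1}{p}$. In the second case, $0<p_{k}\leq p$ reverses each factorwise inequality, giving $\prod_{k=1}^{n}(1-p_{k})\geq(1-p)^{n}$ and therefore $E[T]\geq E[G]=\frac{1}{p}$. The monotonicity of a finite product in each of its factors is what drives both bounds, and the independence assumption is precisely what licenses the factorization of the survival probability.

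I do not expect a genuine obstacle here; the only point meriting a word of care is convergence. In the first case the dominating geometric series converges, so $E[T]$ is automatically finite and the term-by-term bound applies directly. In the second case $E[T]$ may be infinite, but then the claimed inequality $E[T]\geq\frac{1}{p}$ holds trivially, and when $E[T]<\infty$ the same term-by-term comparison gives the result. Thus the lemma reduces entirely to the factorization of the tail probability under independence together with elementary monotonicity of finite products.
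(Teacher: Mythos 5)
Your argument is correct, and it takes a different route from the paper. You use the tail-sum identity $E[T]=\sum_{n\geq 0}P(T>n)$ together with the factorization $P(T>n)=\prod_{k=1}^{n}(1-p_{k})$, and compare the survival probabilities termwise with $(1-p)^{n}$; the monotonicity of the finite product in each factor then gives both directions at once. The paper instead works with the direct series $E[\tau]=p_{1}+\sum_{n\geq 2}np_{n}q_{1}\cdots q_{n-1}$ and perturbs the success probabilities to $p$ one at a time, tracking the net change in the expectation at each exchange and concluding by induction. Your approach buys several things: it is more elementary (no bookkeeping of how individual terms of the series move under a perturbation), it treats both inequalities symmetrically in a single computation, and it handles the convergence issue explicitly --- in particular the case $0<p_{k}\leq p$, where $E[T]$ may be infinite (indeed $P(T=\infty)=\prod_{k}(1-p_{k})$ can be positive when $\sum_{k}p_{k}<\infty$), is disposed of cleanly, whereas the paper's series manipulation implicitly assumes the relevant sums converge and that success occurs almost surely. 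The paper's exchange argument, on the other hand, is the kind of local perturbation reasoning that it reuses later (e.g. in Lemma \ref{lem:crw}), so it has some expository continuity there; but as a proof of this lemma your tail-sum comparison is the more transparent and more robust one.
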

\begin{proof}
The expected time $E[\tau]=p_{1}+\sum\limits_{n=2}^{\infty}np_{n}q_{1}\cdots q_{n-1}$ for $q_{n}=1-p_{n}$. We first decrease $p_{1}$ to $p$ and keep others unchanged, then $E[\tau]$ increases by $p_{1}-p$ but decreases by at least $2(p_{1}-p)$, thus $E[\tau]$ is decreased. Change the success probability one by one, by induction we have the desired result.
\end{proof}

We are now ready to prove Theorem \ref{th:mean}.
\begin{proof}[\textbf{Proof of Theorem \ref{th:mean}}]
If the expected dormancy time $\int_{(0,\infty)}\frac{1}{\lambda}\mu(d\lambda)=\infty$, then once an active individual enters dormancy, the expected time to revive is infinite. Such an event occurs with positive probability, thus we know that $E[T_{MRCA}(n, m)]=\infty$.

If $\int_{(0,\infty)}\frac{1}{\lambda}\mu(d\lambda)<\infty$, then by Proposition
\ref{prop:limit}, after a long time, the absolute distribution of the ancestral line is approximately $\frac{1}{1+\int_{(0,\infty)}\frac{1}{\lambda}\mu(d\lambda)}\delta_{0}+\frac{1}{1+\int_{(0,\infty)}\frac{1}{\lambda}\mu(\lambda)}\frac{1}{\lambda}.\mu(d\lambda)$. If we consider the initial $K=n+||m||_{TV}$ individual by pairs, then we have
\[T_{MRCA}(n,m)=\max_{1\leq i<j\leq K}T_{MRCA}^{i, j}\leq \sum_{1\leq i<j\leq K}T_{MRCA}^{i, j},\]
where $T_{MRCA}^{i, j}$ denotes the time it takes for individuals $i$ and $j$ to coalesce. Given $\epsilon>0$ small enough, for each pair $i$ and $j$, there exits $T_{i,j}(\epsilon)>0$ such that 
\[P(i \text{ is active at } t)\cdot P(j \text{ is active at } t)\geq p_{\epsilon}=(\frac{1}{1+\int_{(0,\infty)}\frac{1}{\lambda}\mu(d\lambda)}-\epsilon)^{2}, \textit{ for } t\geq T_{i,j}(\epsilon).\]
From the construction in Subsection \ref{sub:ex}, we know that if $i$ and $j$ have not coalesced in $[0, T_{i, j}(\epsilon)]$, then starting from $T_{i, j}(\epsilon)$, once the Possion clock $P_{i, j}$ with intensity $1$ rings and they are both active at that time , then the coalescence occurs. Now, the success probability at each time that $P_{i,j}$ rings is bounded blew by $p_{\epsilon}$, then by Lemma \ref{lem:geo}, we will need less average number of attempts than the case when the success probability is exactly $p_{\epsilon}$. Since the time interval $\{\tau_{k}\}_{k\in\mathbb{N}}$ of $P_{i, j}$ are independent exponential random variables with parameter $1$, by
the Wald's identity (see e.g. \cite{cox1962renewal}), the expected time to coalesce is 
\[E[\tau_{1}+\cdots+\tau_{A}]=E[\tau_{1}]E[A]\leq \frac{1}{p_{\epsilon}},\]
where $A$ denotes the number of attempts.

In conclusion, we have
\[E[T_{MRCA}(n,m)]\leq \sum_{1\leq i<j\leq K}E[T_{MRCA}^{i, j}]\leq \sum_{1\leq i<j\leq K}(T_{i,j}(\epsilon)+\frac{1}{p_{\epsilon}})<\infty.\]
The proof is completed.
\end{proof}

\begin{rem}\label{rem:explicit}
For the single seed-bank case, i.e., $\mu=\delta_{\lambda}$, the $E[T_{MRCA}]$ of $2$ individuals can be explicitly calculated by the following recurrence equation:
\begin{equation}
\left\{\begin{array}{l}
E[T_{MRCA}(0,2\delta_{\lambda})]=\frac{1}{2\lambda}+E[T_{MRCA}(1,\delta_{\lambda})],\\
E[T_{MRCA}(1,\delta_{\lambda})]=\frac{1}{c+\lambda}+\frac{c}{c+\lambda}E[T_{MRCA}(0,2\delta_{\lambda})]+\frac{\lambda}{c+\lambda}E[T_{MRCA}(2,0)],\\
E[T_{MRCA}(2,0)]=\frac{1}{1+2c}+\frac{2c}{1+2c}E[T_{MRCA}(1,\delta_{\lambda})].
\end{array}\right.
\end{equation}	
The solution is $E[T_{MRCA}(0,2\delta_{\lambda})]=1+\frac{4c+3}{2\lambda}+\frac{2c^{2}+c}{2\lambda^{2}}$. 	
\end{rem}

As a byproduct of Theorem \ref{th:mean}, the limiting distribution of the \textit{continuum seed-bank diffusion} (see Definition \ref{eq:inf}) can be derived. See Corollary 2.10 in \cite{blath2016} for the single seed-bank case.

\begin{prop}\label{prop:fixation}[\textit{fixation in law}]
If $\int_{(0,\infty)}\frac{1}{\lambda}\mu(d\lambda)<\infty$ and $(X_{0}, Y_{0})=(x, y)\in \overline{D}$, then as $t\rightarrow\infty$, $(X_{t},Y_{t})$ converges in distribution to a $\mathbb{R}\times L^{\infty}$-valued random variable $(X_{\infty}, Y_{\infty})$ whose distribution is given by
	\[\frac{x+\int_{(0,\infty)}y(\lambda)\frac{1}{\lambda}\mu(d\lambda)}{1+\int_{(0,\infty)}\frac{1}{\lambda}\mu(d\lambda)}\delta_{(1,1)}+\frac{1-x+\int_{(0,\infty)}(1-y(\lambda))\frac{1}{\lambda}\mu(d\lambda)}{1+\int_{(0,\infty)}\frac{1}{\lambda}\mu(d\lambda)}\delta_{(0,0)}.\]	 
\end{prop}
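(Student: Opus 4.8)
The plan is to exploit the duality relation (\ref{determining}) to convert the long-time behaviour of the diffusion $(X_t,Y_t)$ into the long-time behaviour of the block counting process $(N_t,M_t)$, whose limiting law is already pinned down by the renewal analysis of Subsection 2.3. First I would fix an arbitrary nontrivial dual configuration $(N_0,M_0)=(n,\sum_{i=1}^{K_0}m_i\delta_{\lambda_i})$ and write, by (\ref{determining}),
\[
E[X_t^n\prod_{i=1}^{K_0}Y_t(\lambda_i)^{m_i}]=E[x^{N_t}\prod_{i=1}^{K_t}y(\Lambda_{i,t})^{M_{i,t}}],\qquad t\geq 0.
\]
Since $\int_{(0,\infty)}\frac{1}{\lambda}\mu(d\lambda)<\infty$, Theorem \ref{th:mean} gives $E[T_{MRCA}(n,m)]<\infty$, so the coalescent merges all initial blocks into a single block almost surely in finite time; from then on the lone block merely alternates between active and dormant as the alternating renewal process studied in Subsection 2.3.

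Next I would take $t\to\infty$ on the right-hand side. Every factor is bounded by $1$, so dominated convergence applies, and the flag of the single surviving block converges to the limiting distribution of Proposition \ref{prop:limit}: the block is active (contributing the factor $x$, since then $(N_t,M_t)=(1,0)$) with probability $\frac{1}{1+\int_{(0,\infty)}\frac{1}{\lambda}\mu(d\lambda)}$, and dormant with rate $\lambda$ (contributing the factor $y(\lambda)$, since then $(N_t,M_t)=(0,\delta_\lambda)$) with the residual mass $\frac{1}{1+\int_{(0,\infty)}\frac{1}{\lambda}\mu(d\lambda)}\frac{1}{\lambda}.\mu(d\lambda)$. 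Hence
\[
\lim_{t\to\infty}E[X_t^n\prod_{i=1}^{K_0}Y_t(\lambda_i)^{m_i}]=\frac{x+\int_{(0,\infty)}y(\lambda)\frac{1}{\lambda}\mu(d\lambda)}{1+\int_{(0,\infty)}\frac{1}{\lambda}\mu(d\lambda)}=:p_\infty,
\]
independently of the chosen configuration $(n,m)$; note that $p_\infty$ is exactly the claimed weight of $\delta_{(1,1)}$, and $1-p_\infty$ the weight of $\delta_{(0,0)}$.

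Then I would read off the structure of the limit from these moments. Choosing $m=0$ gives $E[X_t^n]\to p_\infty$ for every $n\geq 1$; since $X_t\in[0,1]$, the relation $E[X_\infty(1-X_\infty)]=p_\infty-p_\infty=0$ forces $X_\infty\in\{0,1\}$ with $P(X_\infty=1)=p_\infty$. For the joint fixation I would take linear combinations of dual functionals: $E[X_t(1-Y_t(\lambda))]=E[X_t]-E[X_tY_t(\lambda)]\to p_\infty-p_\infty=0$, and likewise $E[(1-X_t)Y_t(\lambda)]\to 0$; as both integrands are nonnegative, the limits force $Y_\infty(\lambda)=X_\infty$. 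Thus $(X_\infty,Y_\infty)$ is supported on $\{(0,0),(1,1)\}$, and the candidate limit is $p_\infty\delta_{(1,1)}+(1-p_\infty)\delta_{(0,0)}$, matching the statement.

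Finally, I would upgrade this convergence of all mixed moments to convergence in distribution on $\mathbb{R}\times L^\infty$ by invoking Remark 6.4 in \cite{jiao2023wright}, according to which the law of $(X_t,Y_t)$ is determined by the functionals appearing in (\ref{determining}). The hard part will be exactly this last step: $L^\infty$ is non-separable, so deducing genuine weak convergence from convergence of the moment functionals requires that these functionals form a convergence-determining class in the topology generated by the integer-valued finite measures, together with the attendant tightness. The fact that the limit concentrates on the two explicit points $(0,0)$ and $(1,1)$ should make this tractable, but carrying out the infinite-dimensional weak-convergence argument rigorously is the delicate point.
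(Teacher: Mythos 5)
Your proposal follows essentially the same route as the paper's own proof: use the duality relation, the finiteness of $E[T_{MRCA}]$ from Theorem \ref{th:mean}, and the limiting distribution of the ancestral line from Proposition \ref{prop:limit} to show that all dual mixed moments converge to the same constant $p_{\infty}$, then conclude via Remark 6.4 of \cite{jiao2023wright} that this pins down the two-point limit law. Your additional explicit argument that the constant moments force concentration on $\{(0,0),(1,1)\}$ (via $E[X_\infty(1-X_\infty)]=0$ and $E[X_t(1-Y_t(\lambda))]\to 0$) is a worthwhile elaboration of a step the paper leaves implicit, but it does not change the approach.
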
 
\begin{proof}
By Remark 6.4 in \cite{jiao2023wright}, the distribution of $(X_{t}, Y_{t})$ on $\mathbb{R}\times L^{\infty}$ is uniquely determined by (\ref{determining}). For $m=\sum\limits_{i=1}^{K_{0}}m_{i}\delta(\lambda_{i})$ and $T=T_{MRCA}(n,m)$, by $E[T]<\infty$ and $(x,y)\in D$, we have
\begin{eqnarray*}
&&\lim_{t\rightarrow\infty}E[X_t^n\prod_{i=1}^{K_{0}}Y_{t}(\lambda_{i})^{m_{i}}]=\lim_{t\rightarrow\infty}E[x^{N_{t}}\prod_{i=1}^{K_{t}}y(\Lambda_{i,t})^{M_{i,t}}]\\
&=&\lim_{t\rightarrow\infty}E[x^{N_{t}}\prod_{i=1}^{K_{t}}y(\Lambda_{i,t})^{M_{i,t}}|T\leq t]P(T\leq t)+\lim_{t\rightarrow\infty}E[x^{N_{t}}\prod_{i=1}^{K_{t}}y(\Lambda_{i,t})^{M_{i,t}}|T>t]P(T> t)\\
&=& \lim_{t\rightarrow\infty}(xP(N_{t}=1, T\leq t)+\int_{(0,\infty)}y(\lambda)P(M_{t}=1, T\leq t, d\lambda))\\
&=& \frac{x+\int_{(0,\infty)}y(\lambda)\frac{1}{\lambda}\mu(d\lambda)}{1+\int_{(0,\infty)}\frac{1}{\lambda}\mu(d\lambda)},
\end{eqnarray*}
where the measure $P(M_{t}=1, T\leq t, d\lambda)$ corresponds to $q(t, d\lambda)$
in Proposition \ref{prop:limit}. Therefore, the limiting moments of $(X_{t}, Y_{t})$ are independent of $(n, m)$ and always equal $\frac{x+\int_{(0,\infty)}y(\lambda)\frac{1}{\lambda}\mu(d\lambda)}{1+\int_{(0,\infty)}\frac{1}{\lambda}\mu(d\lambda)}$, which uniquely characterizes the distribution given in the theorem.
\end{proof}

\subsection{Comparisons of $E[T_{MRCA}]$}

In this subsection, we discuss the problem of comparing the $E[T_{MRCA}]$ of different cases. Although some seemingly intuitive statements are still open problems, they have no applications in the subsequent context. For the proof of Theorem \ref{th:bounds}, we only need the following elementary lemma:

\begin{lemma}\label{lem:compare}~
\begin{enumerate}
	\item For fixed measure $\mu$, $E[T_{MRCA}(2,0)]\leq E[T_{MRCA}(1,\delta_{\lambda})] \leq E[T_{MRCA}(0,2\delta_{\lambda})]$ for any $\lambda\in (0,\infty)$;
	\item If the support of $\mu$ has a lower bound $\underline{\lambda}_{0}>0$, then we have
	\begin{equation}\label{eq:1}
		E[T_{MRCA}^{\{\mu\}}(1,\delta_{\lambda})]\leq E[T_{MRCA}^{\{\mu\}}(1,\delta_{\underline{\lambda}_{0}})]\leq
	E[T_{MRCA}^{\{c\delta_{\underline{\lambda}_{0}}\}}(1,\delta_{\underline{\lambda}_{0}})],
	\end{equation}
	\begin{equation}\label{eq:2}
	E[T_{MRCA}^{\{\mu\}}(0,\delta_{\lambda}+\delta_{\lambda^{\prime}})]\leq  E[T_{MRCA}^{\{\mu\}}(0, 2\delta_{\underline{\lambda}_{0}})]\leq E[T_{MRCA}^{\{c\delta_{\underline{\lambda}_{0}}\}}(0,2\delta_{\underline{\lambda}_{0}})],
	\end{equation}
	where the superscript indicates the measure describing the transition, $c\delta_{\underline{\lambda}_{0}}$ implies the singe seed-bank case,  
	and $\lambda\geq \underline{\lambda}_{0}$. 
\end{enumerate}
\end{lemma}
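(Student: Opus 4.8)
The plan is to treat the two parts by quite different means: Part 1 by a hitting-time decomposition, and Part 2 by a coupling driven by stochastic domination of the dormancy laws.

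\textbf{Part 1.} Here I would exploit that, for a configuration of exactly two blocks, the only transition reaching the absorbing state $(1,0)$ is a coalescence, which fires only from the state $(2,0)$ at rate $C^2_2=1$. Consequently, starting from $(1,\delta_\lambda)$ the process cannot be absorbed before it first visits $(2,0)$, since every other two-block state has at most one active block and so a silent coalescence clock. Letting $\sigma$ be the first hitting time of $(2,0)$, the strong Markov property yields $T_{MRCA}(1,\delta_\lambda)=\sigma+T'$ with $T'$ an independent copy of $T_{MRCA}(2,0)$, whence $E[T_{MRCA}(1,\delta_\lambda)]=E[\sigma]+E[T_{MRCA}(2,0)]\ge E[T_{MRCA}(2,0)]$. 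The same idea applies one level down: from $(0,2\delta_\lambda)$ the sole available transition is the revival of one dormant block, which necessarily lands in $(1,\delta_\lambda)$, so $(1,\delta_\lambda)$ is a forced waypoint and $E[T_{MRCA}(0,2\delta_\lambda)]\ge E[T_{MRCA}(1,\delta_\lambda)]$. This settles Part 1 cleanly.

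\textbf{Part 2.} Both displayed chains are instances of a single monotonicity principle: shortening dormancy (speeding up revival) can only decrease $E[T_{MRCA}]$; the explicit value $E[T_{MRCA}(0,2\delta_\lambda)]=1+\frac{4c+3}{2\lambda}+\frac{2c^2+c}{2\lambda^2}$ from Remark \ref{rem:explicit}, decreasing in $\lambda$, is a reassuring sanity check on the direction. The first inequality in each line keeps $\mu$ fixed and only lowers the rate(s) of the initial dormant block(s) from $\lambda\ge\underline{\lambda}_0$ to $\underline{\lambda}_0$, for which the relevant input is $\mathrm{Exp}(\lambda)\preceq_{st}\mathrm{Exp}(\underline{\lambda}_0)$. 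The second inequality replaces $\mu$ (supported in $[\underline{\lambda}_0,\infty)$) by the single seed-bank $c\delta_{\underline{\lambda}_0}$; here I would use that, since every $\lambda$ in the support satisfies $\lambda\ge\underline{\lambda}_0$, the $\mu$-dormancy CDF $\int_{(0,\infty)}(1-e^{-\lambda t})\mu(d\lambda)/c$ dominates $1-e^{-\underline{\lambda}_0 t}$ pointwise, so the $\mu$-dormancy time is stochastically smaller than $\mathrm{Exp}(\underline{\lambda}_0)$. In every case the task reduces to comparing two copies of the two-block process whose ancestral lines (Subsection \ref{sub:ex}) have stochastically ordered dormancy periods while their active periods are identically $\mathrm{Exp}(c)$.

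\textbf{The monotonicity step and the main obstacle.} To convert the ordering of dormancy laws into an ordering of $E[T_{MRCA}]$, I would use the graphical construction: for two blocks the coalescence clock is an independent rate-$1$ Poisson clock that is \emph{live} only while both ancestral lines are simultaneously active. Writing $L(t)=\int_0^t \mathbf{1}\{\text{both lines active at }s\}\,ds$ for the overlap local time, absorption occurs when $L$ first exceeds an independent $\mathcal{E}\sim\mathrm{Exp}(1)$, so
\[
E[T_{MRCA}]=\int_0^\infty P(T_{MRCA}>t)\,dt=\int_0^\infty E\!\left[e^{-L(t)}\right]dt.
\]
Since the two lines are independent, I would condition on one of them, reducing matters to showing that the occupation time of the other (faster-reviving) line over an arbitrary fixed Borel set is stochastically larger, forcing $E[e^{-L(t)}]$ to decrease. \emph{This occupation-time comparison is the crux and the main obstacle.} A naive pathwise coupling that merely lengthens each dormant stretch of the slow line fails to give a pointwise domination of the ``both active'' indicator, because a faster-reviving line also re-enters dormancy sooner, so once it gets ahead its active and dormant stretches become misaligned both with the slow line and with the conditioned line. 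I therefore expect the genuine work to be a stochastic-ordering argument for occupation times of an alternating renewal process under stochastically shorter off-times, rather than an off-the-shelf pathwise coupling; the lower bound $\underline{\lambda}_0$ on the support is precisely what supplies the required domination of the dormancy laws and lets this comparison close.
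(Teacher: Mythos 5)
Your Part 1 is correct and is essentially the paper's own argument: $(2,0)$ is a forced waypoint for absorption from $(1,\delta_{\lambda})$, and $(1,\delta_{\lambda})$ is a forced waypoint from $(0,2\delta_{\lambda})$, so both inequalities follow from the strong Markov property.

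For Part 2 there is a genuine gap, and you have located it yourself without closing it: everything is made to rest on the claim that stochastically shorter dormancy periods make the active-state occupation time of a single ancestral line, over an arbitrary fixed Borel set, stochastically larger, and this claim is never established. This is not a routine omission. The paper discusses precisely this obstruction in Remark \ref{rem:open} (item 3): the comparison of $E[T_{MRCA}]$ under first-order domination of the dormancy laws is left as an open problem there, exactly because the pre-stationary behaviour of $P_{t}=P(\text{the individual is active at }t)$ does not order in the obvious way; so an occupation-time route runs straight into an unresolved, and in the generality you need possibly false, monotonicity statement. The paper's proof of this lemma avoids continuous-time occupation comparisons entirely. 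It stratifies the embedded jump chain by the number of active blocks, $(0,\delta_{\lambda}+\delta_{\lambda^{\prime}})\rightleftharpoons(1,\delta_{\lambda^{\prime\prime}})\rightleftharpoons(2,0)$, and observes that because \emph{every} flag that can ever appear lies in the support of $\mu$ and hence is at least $\underline{\lambda}_{0}$, at every visit to the middle level the one-step probability of moving up is at least $\frac{\underline{\lambda}_{0}}{c+\underline{\lambda}_{0}}$, the coalescence probability from $(2,0)$ is $\frac{1}{1+2c}$ independently of $\mu$, and the expected sojourn times at the two lower levels are bounded by $\frac{1}{2\underline{\lambda}_{0}}$ and $\frac{1}{c+\underline{\lambda}_{0}}$; Lemma \ref{lem:geo} then compares the expected number of rounds with that of the single seed-bank chain at rate $\underline{\lambda}_{0}$. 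This per-jump, pointwise comparison is exactly what the lower bound on the support buys, and it is the feature your stochastic-domination framing fails to exploit at the step where it is needed. To repair your proof you would either have to prove the occupation-time ordering (thereby resolving part of the paper's open problem) or switch to the embedded-chain comparison.
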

\begin{proof}~

1. This is obvious since for two individuals, it is necessary for the block counting process to pass through the state $(2,0)$ for the coalescence. If we start from $(0,2\delta_{0})$, then the next state must be $(1,\delta_{0})$, thus the former requires more time in average.

2. Note that the coalescent process can be categorized into $3$ levels based on the number of active individuals, i.e., 
	\[(0,\delta_{\lambda}+\delta_{\lambda^{\prime}})\rightleftharpoons (1,\delta_{\lambda^{\prime\prime}})\rightleftharpoons(2,0),\] 
	Once the process attains the level $(2,0)$, the probability to coalesce is $\frac{1}{1+2c}$ independent of the measure $\mu$. In order to attain $(2,0)$, at the middle level $(1,\delta_{\lambda^{\prime\prime}})$, the success probability probability is $\frac{\lambda^{\prime\prime}}{c+\lambda^{\prime\prime}}$. Even it fails and then goes to the first level $(0,\delta_{\lambda}+\delta_{\lambda^{\prime\prime
	}})$ for some new $\lambda\in (0,\infty)$, it will come back the middle level at the next step to states $(1,\delta_{\lambda})$ or $(1,\delta_{\lambda^{\prime\prime}})$. Since $\underline{\lambda}_{0}$ is 
	the lower bound, the success probability is always greater than $\frac{\underline{\lambda}_{0}}{c+\underline{\lambda}_{0}}$, the expected sojourn time at the middle level is always less than $\frac{1}{c+\underline{\lambda}_{0}}$, and the expected sojourn time at the first level is always less than $\frac{1}{2\underline{\lambda}_{0}}$.
	
	Now, for the first inequality in (\ref{eq:1}), the only difference is the initial value. By Lemma \ref{lem:geo}, $(1,\delta_{\underline{\lambda}_{0}})$ takes the most steps in average to first attain $(2,0)$ (the first \textit{round}) after which there is no difference anymore, and meanwhile the expected sojourn time at each step is the longest. For the second inequality in (\ref{eq:1}), we only need to consider more rounds and the same argument holds for each of them. The first inequality in (\ref{eq:2}) follows by (\ref{eq:1}) and the fact that $(0,\delta_{\lambda}+\delta_{\lambda^{\prime}})$ takes less time in average to the middle level than from $(0,2\delta_{\underline{\lambda}_{0}})$ to $(1,\delta_{\underline{\lambda}_{0}})$. The second inequality in (\ref{eq:2}) directly comes from (\ref{eq:1}). 
\end{proof}

We discuss some open problems in the following Remark.
\begin{rem}\label{rem:open}~
\begin{enumerate}
	\item The monotonicity of $E[T_{MRCA}^{\{\mu\}}(1,\delta_{\lambda})]$ with respect to $\lambda\in (0,\infty)$. 
	
	Since the measure $\mu$ is fixed, we only need to consider the expected time from $(1,\delta_{\lambda})$ to $(2,0)$, which is denoted by $f(\lambda)$.
	If we denote by $f(\lambda,\lambda^{\prime})$ the expected time from $(0,\delta_{\lambda}+\delta_{\lambda^{\prime}})$ to $(2,0)$, then we have the following recurrence equation:
\begin{equation}\label{eq:2eq}
\left\{\begin{array}{l}
f(\lambda)=\frac{1}{c+\lambda}+\frac{1}{c+\lambda}\int_{(0,\infty)}f(\lambda,\lambda^{\prime})\mu(\lambda^{\prime}),\\
f(\lambda,\lambda^{\prime})=\frac{1}{\lambda+\lambda^{\prime}}+\frac{\lambda^{\prime}}{\lambda+\lambda^{\prime}}f(\lambda)+\frac{\lambda}{\lambda+\lambda^{\prime}}f(\lambda^{\prime}).
\end{array}\right.
\end{equation}	
By direct calculations, we have
\begin{equation}\label{eq:recur}
f(\lambda)=\frac{1}{\lambda}+\frac{\int_{(0,\infty)}\frac{1}{\lambda+\lambda^{\prime}}f(\lambda^{\prime})\mu(d\lambda^{\prime})}{1+\int_{(0,\infty)}\frac{1}{\lambda+\lambda^{\prime}}\mu(d\lambda^{\prime})}.
\end{equation}
For the single seed-bank case, the solution is $f(\lambda)=\frac{1}{\lambda}+\frac{c}{2\lambda^{2}}$, thus the statement is true. For finitely many seed-banks, i.e, $\mu=\sum\limits_{i=1}^{n}c_{i}\delta_{\lambda_{i}}$, (\ref{eq:recur}) is a system of linear equations which can also be explicitly solved . However, for general $\mu$, it is not clear although we firmly believe its correctness. Since $f(\lambda)$ is unbounded, the method of approximating the measure seems not practical.

\item The monotonicity of $E[T_{MRCA}^{\{\mu\}}(0,\delta_{\lambda}+\delta_{\lambda^{\prime}})]$ with respect to $\lambda\in (0,\infty)$ for fixed $\lambda^{\prime}\in (0,\infty)$.

Following the notation in 1, we need to show the monotonicity of $f(\lambda, \lambda^{\prime})$ as the function of $\lambda$. By the second equation in 
(\ref{eq:2eq}), assuming that $f(\lambda)$ is differentiable which is not verified yet, we have
\[\frac{\partial f(\lambda,\lambda^{\prime})}{\partial \lambda}=\frac{-1+\lambda^{\prime}(f(\lambda^{\prime})-f(\lambda))+\lambda^{\prime}(\lambda+\lambda^{\prime})\frac{df(\lambda)}{d\lambda}}{(\lambda+\lambda^{\prime})^{2}},\]
which is negative if $\frac{df(\lambda)}{d\lambda}\leq 0$ and $\lambda\leq \lambda^{\prime}$. Therefore, this statement also relies on the study of the properties of $f(\lambda)$.

\item $E[T_{MRCA}^{\{\mu\}}(2,0)]\leq E[T_{MRCA}^{\{\mu^{\prime}\}}(2,0)]$ if
	$\mu^{\prime}$ is first-order dominated by $\mu$.

This conjecture arises from the fact that $\lim\limits_{t\rightarrow \infty}P_{t}=\frac{1}{1+\int_{(0,\infty)}\frac{1}{\lambda}\mu(d\lambda)}$ for $P_{t}:=P(\text{the individual is active at }t)$ (see Proposition \ref{prop:limit}). If 
$\int_{(0,\infty)}\frac{1}{\lambda}\mu^{\prime}(d\lambda)\geq \int_{(0,\infty)}\frac{1}{\lambda}\mu(d\lambda)$, then after a long period of time, the two individuals described by $\mu$ are more likely to coalesce, see the proof of Theorem \ref{th:mean}. From the definition of the first-order stochastic dominance, we have $\mu(\lambda,\infty)\geq \mu^{\prime}(\lambda,\infty)$ for any $\lambda\in(0,\infty)$. Then, through integration by parts, we know the desired condition holds. However, the comparison of $P_{t}$ before being stationary is not clear. By the classical renewal theory (see e.g. \cite{cox1962renewal}), $P_{t}$ is the solution to the renewal equation
\[P_{t}=e^{-ct}+\int_{0}^{t}P_{t-s}dK(s), t\geq 0,\]
where $K(t)=\int_{(0,\infty)}(1-e^{-\lambda t})\frac{\mu(d\lambda)}{c}$ is the CDF of the dormancy time. Taking the Laplace transform on both sides, we have 
\[\mathcal{L}(P)(\xi)=\frac{1}{\int_{(0,\infty)}\frac{\xi(\xi+\eta+c)}{\xi+\eta}\frac{\mu(d\eta)}{c}},\]
Even for the single seed-bank case where $P_{t}=\frac{\lambda}{c+\lambda}+\frac{c}{c+\lambda}e^{-(c+\lambda)t}$ can be obtained by inversion, $P_{t}$ is not increasing with respect to $\lambda$.  
\end{enumerate}

\end{rem}
 
\section{Accelerated coalescent}
In \cite{blath2016}, the proof of \textit{not coming down from infinity} is based on the coupling with a \textit{colored seed-bank coalescent} $\{\underline{\Pi}_{t}\}_{t\geq 0}$ which behaves like the \textit{seed-bank coalescent} except that the 
individuals are marked with a color (\textit{white} or \textit{blue}) to indicate whether they have left the seed-bank at least once. At the beginning, all individuals, whether active or dormant, are colored \textit{white}. During the coalescent process, whenever a dormant block leaves the seed-bank, all individuals within it are colored blue. In other words, if we only consider those white individuals, they will disappear after leaving the seed-bank. We have a similar construction for the more general continuum case.

In the present paper, we refer to the coalescent process involving only white individuals as the \textit{accelerated coalescent} whose block counting process (counting the number of active/dormant blocks in partitions) $\{(\underline{N}_{t}, \underline{M}_{t})\}_{t\geq 0}$ is the Markov jump process described by 
\begin{equation}
(n, m) \mapsto (n^{\prime}, m^{\prime}) \text { at rate }\left\{\begin{array}{cl}
n\mu(B), & (n^{\prime}, m^{\prime})=(n-1, m+\delta_{\lambda}), \lambda\in B, \text{for}~B\in\mathcal{B}(0,\infty),\\
\lambda m(\{\lambda\}), & (n^{\prime}, m^{\prime})=(n, m-\delta_{\lambda}),\\
C^{2}_{n}, & (n^{\prime}, m^{\prime})=(n-1, m),
\end{array}\right.
\end{equation}
The coalescent process itself can then be defined imitating Definition \ref{defn:coalescent}.

The term \textit{accelerated} arises from the fact that it is faster than the \textit{continuum seed-bank coalescent} since individuals will never participate in the coalescence once they enter dormancy. More precisely, we have the following relation
\begin{equation}\label{eq:couple}
P(N_{t}^{(n,m)}\geq \underline{N}_{t}^{(n,m)}\text{ and } ||M_{t}^{(n,m)}||_{TV}\geq ||\underline{M}_{t}^{(n,m)}||_{TV} \text{ for any }t\geq 0)=1,
\end{equation}
where $\{N_{t}, M_{t}\}_{t\geq 0}$ is the block counting process of the \textit{continuum seed-bank coalescent}, and the superscript $(n,m)$ indicates the initial value. By the same argument as for $\{(N_{t}, M_{t})\}_{t\geq 0}$, $n+||m||_{TV}$ can also be countably infinite for $\{(\underline{N}_{t}, \underline{M}_{t})\}_{t\geq 0}$.

\subsection{Not coming down from infinity}

For the more general continuum case, the property can be proved with slight modifications to the proof of Theorem 4.1 in \cite{blath2016}. For the convenience of the readers, we still provide a complete proof. Readers who are familiar with \cite{blath2016} can skip Lemma \ref{lem:A}.

The following lemma demonstrates that there are infinitely many individuals have entered the seed-banks with flags in $(0,\lambda_{0}]$ for some $\lambda_{0}>0$ at any time $t>0$. By the definition of $\{(\underline{N}_{t}, \underline{M}_{t})\}_{t\geq 0}$, $\underline{N}_{t}$ is non-increasing, and when it decreases, there are two possibilities: coalescence or deactivation (entering dormancy).
\begin{lemma}\label{lem:A}
For $n\in \mathbb{N}\cup\{\infty\}$, let $c_{0}:=\mu((0,\lambda_{0}])>0$
\[\tau^{n}_{j}:=\inf\{t\geq 0: \underline{N}_{t}^{(n,0)}=j\} \text{ for } 1\leq j\leq n-1, j<\infty,\] 
\[X^{n}_{j}=I_{\{\text{deactivation at }\tau^{n}_{j-1} \text{ to }(0,\lambda_{0}]\}} 
\text{ for } 2\leq j\leq n, j<\infty,\] 
and define $A^{n}_{t}=\sum\limits_{j=2}^{\infty}X^{n}_{j}I_{\{\tau^{n}_{j-1}\leq t\}}$ which is the number of deactivations to $(0,\lambda_{0}]$ before time $t$, then we have
\begin{equation}
P(A^{\infty}_{t}=\infty, \text{ for any } t\geq 0)=1.	
\end{equation} 
\end{lemma}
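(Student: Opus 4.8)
The plan is to reduce the statement to a second-Borel--Cantelli argument for the pure-death chain governing $\underline{N}$, combined with the fact that this chain comes down from infinity. First I would observe that in the accelerated coalescent the reactivation transitions $(n,m)\mapsto(n,m-\delta_\lambda)$ leave $\underline{N}$ unchanged, so watching only $\underline{N}$ yields a pure-death chain: from level $k$ it waits an $\mathrm{Exp}(r_k)$ time with $r_k:=kc+C^2_k=k\bigl(c+\tfrac{k-1}{2}\bigr)$ and then drops to $k-1$, and by competing exponentials together with the independence of the chosen flag from the holding time, this drop is a deactivation with flag in $(0,\lambda_0]$ with probability
\[
p_k=\frac{kc}{r_k}\cdot\frac{c_0}{c}=\frac{2c_0}{2c+k-1},
\]
independently across levels by the strong Markov property. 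Hence $X^\infty_j$ is a $\mathrm{Bernoulli}(p_j)$ variable and the family $\{X^\infty_j\}_{j\ge 2}$ is independent.

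Second, since $c_0>0$, the series $\sum_{j\ge 2}p_j=2c_0\sum_{j\ge 2}(2c+j-1)^{-1}$ diverges. By the second Borel--Cantelli lemma applied to the independent events $\{X^\infty_j=1\}$, almost surely $X^\infty_j=1$ for infinitely many $j$, i.e. $\sum_{j\ge 2}X^\infty_j=\infty$ a.s.

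Third---and this is what upgrades ``infinitely many'' to ``infinitely many before every $t>0$''---I would control the jump times. Writing $\tau^\infty_j=\sum_{k>j}E_k$, where $E_k$ is the holding time at level $k$, the expected time spent above level $j$ is $\sum_{k>j}r_k^{-1}$. Because $r_k\sim k^2/2$ (the quadratic coalescence rate $C^2_k$ dominates the linear deactivation rate $kc$), this series converges and $\sum_{k>j}r_k^{-1}\to 0$, so by monotone convergence $\tau^\infty_j\downarrow 0$ a.s.; this is exactly the coming-down-from-infinity of $\underline{N}$, which the accelerated coalescent enjoys even though the full seed-bank coalescent does not. Fixing $t>0$, there is then an a.s.\ finite random $J$ with $\tau^\infty_{j-1}\le t$ for all $j\ge J$, whence $A^\infty_t\ge\sum_{j\ge J}X^\infty_j=\infty$, the last equality because discarding finitely many terms cannot undo the divergence established in the second step. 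Finally, since $t\mapsto A^\infty_t$ is nondecreasing, intersecting the a.s.\ events $\{A^\infty_{1/m}=\infty\}$ over $m\in\mathbb{N}$ gives $A^\infty_t=\infty$ for all $t>0$ simultaneously, with probability one.

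The main obstacle I anticipate is the rigorous setup of the $n=\infty$ process and the independence claims: one must construct $\underline{N}^{(\infty,0)}$---for instance as the monotone limit of the finite-$n$ death chains on a common probability space, built level by level from the holding times $E_k$ and the flag marks---and verify that the decision variables $X^\infty_j$ are genuinely mutually independent and independent of the $E_k$, which rests on the memorylessness of the competing clocks and the irrelevance of reactivation events to $\underline{N}$. Once this is in place the quantitative coming-down estimate $\sum_k r_k^{-1}<\infty$ is elementary, and the remaining bookkeeping follows the line of Theorem~4.1 in \cite{blath2016}.
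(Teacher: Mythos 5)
Your argument is correct, but it takes a genuinely different route from the paper's. You work directly with the infinite process: since reactivations do not change $\underline{N}$, the active-block count is a pure-death chain with rate $r_k=kc+C^2_k$ at level $k$, so $\tau^{\infty}_j=\sum_{k>j}E_k$ has expectation $\sum_{k>j}r_k^{-1}<\infty$ and the chain comes down from infinity by an explicit elementary computation; you then get infinitely many marked deactivations in every tail from the second Borel--Cantelli lemma applied to the independent events $\{X^{\infty}_j=1\}$ with $\sum_j p_j=\infty$. The paper instead stays with the finite-$n$ processes throughout: it bounds $P(\tau^{n}_{\lfloor\log n-1\rfloor}\le t)$ from below by comparison with the Kingman coalescent (invoking the known fact that Kingman comes down from infinity), controls the partial sum $\sum_{j=\lfloor\log n\rfloor}^{n}X^{n}_j$ by a mean--variance computation and Chebyshev's inequality, and lets $n\to\infty$. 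Your version is more self-contained --- it replaces the Kingman comparison by the trivial bound $\sum_k r_k^{-1}<\infty$ and the second-moment estimate by Borel--Cantelli --- and it isolates the one structural fact driving the lemma, namely that $\underline{N}$ itself comes down from infinity. What the paper's Chebyshev step buys is a quantitative rate, $A^{n}_t\gtrsim c_0\log n$ with high probability, whose analogue for $A^{n}$ (estimates (\ref{eq:Abound1})--(\ref{eq:Abound2})) is reused in the proof of Theorem \ref{th:bounds}; for the statement of Lemma \ref{lem:A} alone that precision is unnecessary. The one point you should make explicit is the construction of the $n=\infty$ chain from the independent holding times $E_k$ and jump-type marks $X^{\infty}_j$; once that is in place, your final pathwise step (an a.s.\ finite random $J$ with $\tau^{\infty}_{J-1}\le t$ followed by an infinite tail sum $\sum_{j\ge J}X^{\infty}_j$) needs no independence between $J$ and the $X^{\infty}_j$, exactly as you note.
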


\begin{proof}
The stopping time $\tau^{n}_{j}$ is finite a.s. since $\tau^{n}_{j}-\tau^{n}_{j-1}$ is exponentially distributed with parameter $\lambda_{j}:=C^{2}_{j}+cj$, where $c$ is the total mass of $\mu$. At each time $\tau^{n}_{j}$, the probability that a deactivation to $(0,\lambda_{0}]$ happens is $\frac{c_{0}j}{\lambda_{j}}=\frac{2c_{0}}{j+2c-1}$ for $c_{0}=\mu((0,\lambda_{0}])$, thus $\{X^{n}_{j}\}_{2\leq j\leq n}$ are independent Bernoulli random variables with parameters $\frac{2c_{0}}{j+2c-1}$, respectively. Note that $\underline{N}_{t}^{(n,0)}$ decreases faster than the Kingman coalescent since some individuals are vanishing by deactivations. As is well-known, Kingman coalescent $\{\tilde{\Pi}^{n}_{t}\}_{t\geq 0}$ comes down from infinity, and hence we have
\begin{eqnarray}\label{eq:A}
\lim_{n\rightarrow \infty}P(A^{n}_{t}\geq \sum_{j=\lfloor \log n\rfloor}^{n}X^{n}_{j})&\geq& \lim_{n\rightarrow \infty}P(\tau^{n}_{\lfloor \log n-1\rfloor}\leq t)\nonumber\\
&\geq& \lim_{n\rightarrow \infty}P(|\tilde{\Pi}^{n}_{t}|\leq \lfloor \log n-1\rfloor)\geq \lim_{n\rightarrow \infty}P(|\tilde{\Pi}^{\infty}_{t}|\leq \lfloor \log n-1\rfloor)=1,	
\end{eqnarray}
where $\lfloor\cdot\rfloor$ is the floor function. 

For any given $\epsilon\in(0,2c_{0})$, by $E[\sum\limits_{j=\lfloor \log n\rfloor}^{n}X^{n}_{j}]=\sum\limits_{j=\lfloor \log n\rfloor}^{n}\frac{2c_{0}}{j+2c-1}$, and \[2c_{0}[\log(n+2c)-\log(\log n+2c-1)]\leq \sum_{j=\lfloor \log n\rfloor}^{n}\frac{2c_{0}}{j+2c-1}\leq 2c_{0}[\log(n+2c-1)-\log(\log n+2c-3)],\]
we have $E[\sum\limits_{j=\lfloor \log n\rfloor}^{n}X^{n}_{j}]\geq (2c_{0}-\epsilon)\log n$
and $Var[\sum\limits_{j=\lfloor \log n\rfloor}^{n}X^{n}_{j}]\leq E[\sum\limits_{j=\lfloor \log n\rfloor}^{n}X^{n}_{j}]\leq 2c_{0}\log n$ when $n$ is large enough. By Chebyshev's inequality, 
\begin{equation}\label{eq:cheby}
P(\sum_{j=\lfloor \log n\rfloor}^{n}X^{n}_{j}\leq c_{0}\log n)\leq P(|\sum_{j=\lfloor \log n\rfloor}^{n}X^{n}_{j}-E[\sum_{j=\lfloor \log n\rfloor}^{n}X^{n}_{j}]|\geq (c_{0}-\epsilon)\log n)\leq \frac{2c_{0}}{(c_{0}-\epsilon)^{2}\log n},
\end{equation}
thus for any $K>0$, we have $\lim\limits_{n\rightarrow \infty}P(\sum\limits_{j=\lfloor \log n\rfloor}^{n}X^{n}_{j}\leq K)=0$. By (\ref{eq:A}), we finally have 
\[\lim_{n\rightarrow \infty}P(A^{\infty}_{t}\leq K)\leq \lim_{n\rightarrow \infty}P(A^{n}_{t}\leq K)=0,\]
and hence $P(A^{\infty}_{t}=\infty, \text{ for any } t\geq 0)=1$.
\end{proof}

The remaining step is show that there are still infinitely many individuals staying in the seed-banks with flag in $(0,\lambda_{0}]$.

\begin{proof}[\textbf{Proof of Theorem \ref{th:coming}}]
By \ref{eq:couple}, it suffices to prove the theorem for the \textit{accelerated coalescent}, and we assume that $\underline{M}_{0}=0$ since more initial dormant individuals  will only contribute to the desired result. For $t>0$, we define
\[\mathcal{B}_{t}:=\{B\subseteq \mathbb{N}: B=B^{\{r, f\}} \text{ is a dormant  block with flag } f\in (0,\lambda_{0}], \text{ and it entered dormancy at } r\in (0,t]\},\]
i.e., the collection of all dormant blocks before time $t$. Then, for any $K>0$, we have
\begin{eqnarray*}
		P(||\underline{M}^{(\infty, 0)}_{t}||_{TV} \leq K)&\leq& P(\sum_{B\in \mathcal{B}^{n}_{t}}I_{\{B\text{ still exists at } t\}}\leq K)\\
		&=&\sum_{i=1}^{K}\sum_{\{\lambda_{l_1},\cdots \lambda_{l_i}\}}\prod_{j=1}^{i}e^{-\lambda_{l_j}(t-r_{l_j})}\prod_{j=i+1}^{n}(1-e^{-\lambda_{l_j}(t-r_{l_j})})\\
		&\leq& \sum_{i=1}^{K}C^{i}_{n}(1-e^{-\lambda_{0}t})^{n-i}\rightarrow 0 \text{ as }n\rightarrow \infty,
	\end{eqnarray*}

where $\mathcal{B}^{n}_{t}=\{B^{\{r_{1},\lambda_{1}\}}, B^{\{r_{2},\lambda_{2}\}},\cdots B^{\{r_{n},\lambda_{n}\}}, r_{j}\in (0,t] \text{ and }\lambda_{j}\in (0,\lambda_{0}],\text{ for }j=1,2,\cdots,n\}$ is the first $n$ blocks in $\mathcal{B}_{t}$. It comes from the facts that the cardinality of $\mathcal{B}_{t}$ is equal to $A^{\infty}_{t}$ which is countably infinite a.s. by Lemma \ref{lem:A}, and $\{I_{\{B^{\{r_{j},\lambda_{j}\}}\text{ still exists at }t\}}\}_{1\leq j\leq n}$ are independent Bernoulli random variables such that 
\[P(B^{\{r_{j},\lambda_{j}\}} \text{ still exists at }t)=e^{-\lambda_{j}(t-r_{j})},\]
as it has been dormant with flag $\lambda_{j}$ during $[r_{j}, t]$. Since $K$ is arbitrary, we know that $P(||\underline{M}^{(\infty, 0)}_{t}||_{TV}=\infty)=1$ for any $t\geq 0$, and then we have
\[P(\underline{N}^{(\infty,0)}_{t}+||\underline{M}_{t}||^{(\infty,0)}_{TV}=\infty, \text{ for any } t\geq 0)=1\] 
by the monotonicity. The proof is completed.
\end{proof}
	
\subsection{Lower bound of $E[T_{MRCA}]$}

In this subsection, we assume that the transition rates of $||m||_{TV}$ dormant individuals are independent $\nu$-distributed random variables for $\nu=\frac{\mu}{c}$. An asymptotic lower bound of $E[T_{MRCA}(n,m)]$ can be obtained by that of the \textit{accelerated coalescent} which is denoted by $E[\underline{T}_{MRCA}(n,m)]$.
\begin{proof}[\textbf{Proof of Theorem \ref{th:bounds}-1}]
Let $A^{n}:=\sum\limits_{j=2}^{n}I_{\{\text{deactivation at }\tau^{n}_{j-1} \}}$, where $\{\tau^{n}_{j-1}\}_{2\leq j\leq n}$ are defined in Lemma \ref{lem:A}. 
Similarly, we have 
\[2c(\log(n+2c)-\log(1+2c))\leq E[A^{n}]=\sum\limits_{j=2}^{n}\frac{2c}{j+2c-1}\leq 2c(\log(n+2c-1)-\log(2c)).\]
For any given $\epsilon\in (0,2c)$, when $n$ is large enough, we have
\[(2c-\epsilon)\log n\leq E[A^{n}]\text{ and } Var[A^{n}]\leq E[A^{n}]\leq 2c\log n.\]
By Chebyshev's inequality, as in (\ref{eq:cheby}), we then obtain
\begin{eqnarray}
\lim_{n\rightarrow \infty}P(A^{n}&\geq & (2c+\epsilon)\log n)=0,\label{eq:Abound1}\\
\lim_{n\rightarrow \infty}P(A^{n}&\leq & (2c-2\epsilon)\log n)=0.\label{eq:Abound2}
\end{eqnarray}
By (\ref{eq:couple}), we know that $E[T_{MRCA}(n,m)]\geq E[\underline{T}_{MRCA}(n,m)]$. If we rewind all of the $A^{n}+||m||_{TV}$ dormant blocks to time $0$, then a lower bound of $E[\underline{T}_{MRCA}(n,m)]$ is given by the time it takes for them to become active. We already know that the transition rates of $A^{n}$ dormant blocks are independent $\nu$-distributed random variables, and now we assume the same condition for the initial $||m||_{TV}$ individuals. Therefore, $E[\underline{T}_{MRCA}(n,m)]$ is bounded below by the extinction time of the pure death process composed of these independent $A^{n}+||m||_{TV}$ blocks whose lifespans are identically distributed with CDF $K(t)=\int_{(0,\infty)}(1-e^{-\lambda t})\frac{\mu}{c}(d\lambda)$.

When there are $i$ remaining blocks, we denote by $T_{i}$ the occurrence time of the first death, the probability that there is no one \textit{death} by time $t$ is 
\[P(T_{i}\geq t)=\prod\limits_{j=1}^{i}(1-F(t))=(\int_{(0,\infty)}e^{-\lambda t}\frac{\mu(d\lambda)}{c})^{i}.\]    
As the result, we have
\begin{eqnarray*}
E[\underline{T}_{MRCA}(n,m)]&\geq& E[\underline{T}_{MRCA}(n,m)I_{\{A^{n}\geq (2c-2\epsilon)\log n \}}]	\\
&=&E[\underline{T}_{MRCA}(n,m)|A^{n}\geq (2c-2\epsilon)\log n]P(A^{n}\geq (2c-2\epsilon)\log n)\\
&\geq &[\sum_{i=1}^{\lfloor(2c-2\epsilon)\log n\rfloor+||m||_{TV}}\int_{0}^{\infty}(\int_{(0,\infty)}e^{-\lambda t}\frac{\mu(d\lambda)}{c})^{i}dt] P(A^{n}\geq (2c-2\epsilon)\log n).
\end{eqnarray*}

If the support of $\mu$ has an upper bound $\overline{\lambda}_{0}$, then by $\int_{(0,\infty)}e^{-\lambda t}\frac{\mu(d\lambda)}{c}\geq e^{-\overline{\lambda}_{0}t}$ for $t\geq 0$, we have
\begin{eqnarray*}
E[\underline{T}_{MRCA}(n,m)]&\geq &(\sum_{i=1}^{\lfloor(2c-2\epsilon)\log n\rfloor+||m||_{TV}}\frac{1}{i\overline{\lambda}_{0}})P(A^{n}\geq (2c-2\epsilon)\log n)
\\
&\geq &\log((2c-2\epsilon)\log n+||m||_{TV})\frac{1}{\overline{\lambda}_{0}}P(A^{n}\geq (2c-2\epsilon)\log n)\\
&\geq &(\log(\log n+\frac{||m||_{TV}}{2c})+\log(2c-2\epsilon))\frac{1}{\overline{\lambda}_{0}}P(A^{n}\geq (2c-2\epsilon)\log n)
\end{eqnarray*}

If $\mu$ is the Gamma distribution $\Gamma(a, b), a>0, b>0$, then we have
\begin{eqnarray*}
E[\underline{T}_{MRCA}(n,m)]&\geq & [\sum_{i=1}^{\lfloor(2c-2\epsilon)\log n\rfloor+||m||_{TV}}\int_{0}^{\infty}\frac{1}{(1+\frac{t}{b})^{ai}}dt]P(A^{n}\geq (2c-2\epsilon)\log n)\\
&=&(\sum_{i=1}^{\lfloor(2c-2\epsilon)\log n\rfloor+||m||_{TV}}\frac{b}{ai-1})P(A^{n}\geq (2c-2\epsilon)\log n)\\
&\geq & \log ((2c-2\epsilon)\log n+||m||_{TV})\frac{b}{a}P(A^{n}\geq (2c-2\epsilon)\log n)\\
&\geq &\log(\log n+\frac{||m||_{TV}}{2c})+\log(2c-2\epsilon))\frac{b}{a}P(A^{n}\geq (2c-2\epsilon)\log n) 
\end{eqnarray*}

Finally, The desired results follow by (\ref{eq:Abound2}).
\end{proof}

\section{Decelerated coalescent}

To obtain an asymptotic upper bound, we need to suitably decelerate the \textit{continuum seed-bank coalescent}. Following the method of \cite{blath2016} again, we define the following \textit{decelerated coalescent} which has restrictions for coalescence. Same as the \textit{accelerated coalescent}, we only give the description of the block counting process. The coalescent process itself can then be defined imitating Definition \ref{defn:coalescent}. 

The block counting process (counting the number of active/dormant blocks in partitions) $\{(\overline{N}_{t}, \overline{M}_{t})\}_{t\geq 0}$ is the Markov jump process described by
\begin{equation}
(n, m) \mapsto (n^{\prime}, m^{\prime}) \text { at rate }\left\{\begin{array}{cl}
n\mu(B), & (n^{\prime}, m^{\prime})=(n-1, m+\delta_{\lambda}), \lambda\in B, \text{for} B\in\mathcal{B}(0,\infty),\\
\lambda m(\{\lambda\}), & (n^{\prime}, m^{\prime})=(n+1, m-\delta_{\lambda}),\\
C^{2}_{n}I_{\{n\geq \lceil(n+||m||_{TV})^{\alpha}\rceil\}}, & (n^{\prime}, m^{\prime})=(n-1, m), \text{ if } n+||m||_{TV}\geq m_{0},\\
C^{2}_{n}, & (n^{\prime}, m^{\prime})=(n-1, m), \text{ if }n+||m||_{TV}< m_{0},
\end{array}\right.
\end{equation}
where $\lceil\cdot\rceil$ is the ceiling function, $\alpha\in (\frac{1}{2}, 1)$, and $m_{0}\in \mathbb{N}$ is a given threshold. 

That is, if the total number of blocks is not less than the threshold $m_{0}$, then a coalescence occurs only when $n\geq \lceil(n+||m||_{TV})^{\alpha}\rceil$.
There are two points here worth further explanations. 
\begin{itemize}
	\item One point is the restricted coalescence rate $C^{2}_{n}I_{\{n\geq \lceil(n+||m||_{TV})^{\alpha}\rceil\}}$ which is the key idea proposed by \cite{blath2016}. In that paper, they take $\alpha=\frac{1}{2}$ for the reason that "the coalescent now happens at a rate that is of the same order of the rate of migration from seed to plant". From an intuitive perspective, the higher the order of this restriction, the more migrations are needed to fulfill the requirement; the lower the order of 
this restriction, the lower the probability of coalescence at the critical state when $n=\lceil(n+||m||_{TV})^{\alpha}\rceil$. From a proof perspective, we need to estimate the time required to meet the restriction as well as the 
time elapsed until the next coalescence occurs after that. Actually, the latter is more crucial, which is the reason for taking $\alpha>\frac{1}{2}$ in the present paper. 
\item Another point is the threshold $m_{0}$, In \cite{blath2016}, there is no such setting since the proof is carried out for the special case when $c=\lambda=1$, and the general case is left for readers. If $c$ and $\lambda$ are not equal, especially when $\lambda<c$, the proof only works for large enough $||m||_{TV}$. Therefore, we introduce a threshold to distinguish those finite exceptional cases which are negligible as $||m||_{TV}$ goes to infinity.
\end{itemize}

The following lemma is Lemma 4.10 in \cite{blath2016} with a slight modification to incorporate the threshold $m_{0}$, by which we know that
\[E[T_{MRCA}(n,m)]\leq E[\overline{T}_{MRCA}(n,m)].\]
\begin{lemma}\label{lem:upper}
\begin{equation}\label{eq:upper}
P(N_{t}^{(n,m)}\leq \overline{N}_{t}^{(n,m)}\text{ and } ||M_{t}^{(n,m)}||_{TV}\leq ||\overline{M}_{t}^{(n,m)}||_{TV} \text{ for any }t\geq 0)=1,  
\end{equation}
where $\{N_{t}, M_{t}\}_{t\geq 0}$ is the block counting process of the \textit{continuum seed-bank coalescent}, and the superscript $(n,m)$ indicates the initial value.
\end{lemma}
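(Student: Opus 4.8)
The plan is to prove the pathwise domination (\ref{eq:upper}) by constructing an explicit coupling of the \emph{continuum seed-bank coalescent} $\{(N_{t}, M_{t})\}_{t\geq 0}$ and the \emph{decelerated coalescent} $\{(\overline{N}_{t}, \overline{M}_{t})\}_{t\geq 0}$ on a common probability space, both started from the same marked partition with block count $(n,m)$. Rather than comparing counts alone, I would carry along the full marked partitions and maintain the stronger invariant that there is a \emph{flag-preserving injection} $\phi_{t}$ from the blocks of the continuum coalescent into the blocks of the decelerated coalescent: whenever a continuum block $C$ is active, so is $\phi_{t}(C)$, and whenever $C$ is dormant with flag $\lambda$, the block $\phi_{t}(C)$ is dormant with the same flag $\lambda$. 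Such an injection immediately yields $M_{t}\leq \overline{M}_{t}$ as measures on $(0,\infty)$, hence $||M_{t}||_{TV}\leq ||\overline{M}_{t}||_{TV}$, together with $N_{t}\leq \overline{N}_{t}$, which is exactly (\ref{eq:upper}). At time $0$ the two processes coincide, so $\phi_{0}$ is the identity bijection and the invariant holds. I would then verify, by induction over the (for finite $(n,m)$, a.s.\ discrete) sequence of jump times, that every transition preserves the invariant, and pass to the countably infinite case by the same monotone limit over finite restrictions used earlier for (\ref{eq:couple}).

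The coupling of the driving events is as follows. Matched pairs $(C,\phi_{t}(C))$ are driven simultaneously: a deactivation of an active $C$ (intensity $\mu(d\lambda)$ per block) is executed together with a deactivation of $\phi_{t}(C)$ to the \emph{same} flag $\lambda$, and a reactivation of a dormant $C$ is executed together with that of $\phi_{t}(C)$. This is consistent precisely because the invariant keeps the flags of $C$ and $\phi_{t}(C)$ equal, so their reactivation rates $\lambda$ agree and they may fire at the same instant; the unmatched decelerated blocks (those outside the image of $\phi_{t}$) are given independent deactivation and reactivation clocks with the same per-block rates. Coalescences are driven by rate-$1$ clocks attached to pairs of currently active blocks: a clock on a matched pair $(C,C')$ simultaneously governs the decelerated pair $(\phi_{t}(C),\phi_{t}(C'))$, the continuum merge firing unconditionally and the decelerated one only when the restriction $I_{\{\overline{N}_{t}\geq \lceil(\overline{N}_{t}+||\overline{M}_{t}||_{TV})^{\alpha}\rceil\}}$ (for $\overline{N}_{t}+||\overline{M}_{t}||_{TV}\geq m_{0}$) holds, while pairs of active decelerated blocks that are not both matched carry their own independent gated clocks. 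This reproduces the coalescence rates $C^{2}_{N}$ and $C^{2}_{\overline{N}}I_{\{\overline{N}_{t}\geq \lceil(\overline{N}_{t}+||\overline{M}_{t}||_{TV})^{\alpha}\rceil\}}$ respectively.

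The heart of the argument, and the step I expect to be the main obstacle, is checking that coalescences preserve the injection even when the restriction desynchronizes the two processes. When a clock on a matched pair $(C,C')$ rings and the restriction holds, both processes merge and I set $\phi_{t}(C\cup C')=\phi_{t}(C)\cup\phi_{t}(C')$; when the restriction fails, only the continuum process merges, and I redefine $\phi$ on $C\cup C'$ to be $\phi_{t}(C)$, leaving $\phi_{t}(C')$ as a freshly unmatched active decelerated block. A decelerated coalescence that involves at least one unmatched block never affects a continuum block and is absorbed by updating $\phi$ on the single matched block involved, if any. Because the continuum coalescence rate carries no restriction, there is never a forced decelerated merge of two \emph{distinct} images $\phi_{t}(C),\phi_{t}(C')$ without the continuum simultaneously merging $C,C'$; this is exactly what prevents a collision $\phi_{t}(C)=\phi_{t}(C')$ and keeps $\phi$ injective. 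Since the union of two active blocks is active, all flags remain consistent through merges, closing the induction. The only effect of the threshold $m_{0}$ is that below it the decelerated coalescence is ungated and acts identically to the continuum one on matched pairs, which merely simplifies the bookkeeping. Finally, the inequality $E[T_{MRCA}(n,m)]\leq E[\overline{T}_{MRCA}(n,m)]$ follows because, on this coupling, $N_{t}\leq \overline{N}_{t}$ and $||M_{t}||_{TV}\leq ||\overline{M}_{t}||_{TV}$ force the continuum coalescent to reach the state $(1,0)$ no later than the decelerated one.
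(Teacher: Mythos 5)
Your coupling is correct and is essentially the approach the paper takes: the paper simply defers to the colored-block coupling of Lemma 4.10 in \cite{blath2016} (blue blocks tracking the original coalescent, white blocks the surplus created by blocked merges), and your flag-preserving injection $\phi_{t}$ is exactly that coloring made explicit, with the image of $\phi_{t}$ playing the role of the blue blocks and the unmatched blocks the white ones. Your handling of the threshold $m_{0}$ (the gate simply becomes always open while the injection persists) also matches the paper's remark that the extra white blocks survive below $m_{0}-1$ until they coalesce with the blue ones.
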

\begin{proof}
The proof is basically the same as that of Lemma 4.10 except that 
when the total number of blocks attains $m_{0}-1$, there will be no restriction on the coalescence any more. In the language of Lemma 4.10, before the state $m_{0}-1$, the \textit{continuum seed-bank coalescent} corresponds to the process only considering blue blocks. Hence, at the state $m_{0}-1$, there may exist some additional white blocks. Since there are no differences afterwards, we will always have those white blocks until they coalesce with the blue ones, thus the number of active/dormant blocks for the \textit{decelerated coalescent} is not less than that of the \textit{continuum seed-bank coalescent}.
\end{proof}

\subsection{Upper bound of $E[T_{MRCA}]$}

We need the following two lemmas about the asymmetric random walk on $\mathbb{N}_{0}:=\{0,1,\cdots\}$ with $0$ as a reflection boundary.
\begin{lemma}\label{rw}
Let $\{S_{n}\}_{n\in\mathbb{N}_{0}}$ be the asymmetric random walk on $\mathbb{N}_{0}$ reflected at $0$ such that $S_{n+1}=S_{n}+X_{n}$, where $X_{n}\sim Ber(p)$ is a Bernoulli random variable for $S_{n}\neq 0$ and $p\neq\frac{1}{2}$; $X_{n}\equiv 1$ for $S_{n}=0$, and $\{X_{n}\}_{n\in\mathbb{N}_{0}}$ are independent. Denote by $E[T(j, m)]$ the expected hitting time from $j$ to $m$ for $0\leq j<m$, then we have
\[E[T(j,m)]=\frac{m-j}{p-q}+\frac{2pq}{(p-q)^{2}}((\frac{q}{p})^{m}-(\frac{q}{p})^{j}),\]
where $q=1-p$.

In particular, we have
\begin{eqnarray*}
E[T(0,m)]&=&\frac{m}{p-q}+\frac{2pq}{(p-q)^{2}}((\frac{q}{p})^{m}-1),\\
E[T(m-1,m)]&=&\frac{1}{p-q}+\frac{2pq}{(p-q)^{2}}((\frac{q}{p})^{m}-(\frac{q}{p})^{m-1}).
\end{eqnarray*}
\end{lemma}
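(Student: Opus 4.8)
The plan is to treat $E[T(j,m)]$ as the solution of a boundary value problem obtained by first-step analysis. Writing $h(j):=E[T(j,m)]$ for $0\le j\le m$ and reading the dynamics as a nearest-neighbour walk that moves $+1$ with probability $p$ and $-1$ with probability $q=1-p$ (the reflecting rule forcing a $+1$ step out of $0$), conditioning on the first step gives the interior relation $h(j)=1+p\,h(j+1)+q\,h(j-1)$ for $0<j<m$, the reflecting relation $h(0)=1+h(1)$, and the terminal condition $h(m)=0$. Before using these I would record that every such expectation is finite: for $p\neq\tfrac12$ the walk reflected at $0$ is irreducible and either positive recurrent (when $p<q$) or upward-drifting (when $p>q$), so the expected time to reach the finite level $m$ from any $j<m$ is finite, which legitimises the first-step equations.

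Next I would solve the linear second-order difference equation $p\,h(j+1)-h(j)+q\,h(j-1)=-1$ on $0<j<m$ by the standard route. The homogeneous characteristic equation $p r^2-r+q=0$ has roots $r=1$ and $r=q/p$ (distinct since $p\neq q$), and a linear ansatz $h_p(j)=cj$ forces $c(p-q)=-1$, giving the particular solution $h_p(j)=-j/(p-q)$. Hence the general solution is
\begin{equation*}
h(j)=\alpha+\beta\Bigl(\frac{q}{p}\Bigr)^{j}-\frac{j}{p-q},
\end{equation*}
with $\alpha,\beta$ to be fixed by the two boundary relations.

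Finally I would impose the boundary data. The reflecting relation $h(0)=1+h(1)$ reduces, after using $p+q=1$ in the form $p-q-1=-2q$, to $\beta=-\dfrac{2pq}{(p-q)^2}$; the terminal condition $h(m)=0$ then determines $\alpha=\dfrac{m}{p-q}+\dfrac{2pq}{(p-q)^2}\bigl(\tfrac{q}{p}\bigr)^{m}$. Substituting back and collecting terms yields the claimed
\begin{equation*}
E[T(j,m)]=\frac{m-j}{p-q}+\frac{2pq}{(p-q)^2}\Bigl(\bigl(\tfrac{q}{p}\bigr)^{m}-\bigl(\tfrac{q}{p}\bigr)^{j}\Bigr),
\end{equation*}
and the two displayed special cases follow immediately by setting $j=0$ and $j=m-1$. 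The only genuinely delicate points are the a priori finiteness of $h$ (so that the recurrence is a relation between finite quantities rather than between infinities) and the correct bookkeeping of the reflecting equation at $0$, which differs in form from the interior recurrence; the remaining work is routine algebra, the single useful simplification being $p-q-1=-2q$.
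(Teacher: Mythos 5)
Your proof is correct and takes essentially the same route as the paper's: first-step analysis giving the second-order linear recurrence with the reflecting condition $h(0)=1+h(1)$ and absorbing condition $h(m)=0$, solved via the general form $\alpha+\beta(q/p)^{j}-j/(p-q)$ and the boundary data. The only difference is your explicit a priori finiteness remark, which the paper omits; this is a sensible refinement of the same argument rather than a different approach.
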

\begin{proof}
Following the method in Chapter XIV of \cite{feller1991introduction}, we are required to solve the following second order recurrence relation:
\begin{equation*}
\left\{\begin{array}{l}
E[T(j,m)]=1+qE[T(j-1,m)]+pE[T(j+1,m)], 1\leq j\leq m-1,\\
E[T(0,m)]=1+E[T(1,m)] \text{ and } E[T(m,m)]=0. 
\end{array}\right.
\end{equation*}	
The solution is of the form 
\[E[T(j,m)]=\frac{j}{q-p}+A+B(\frac{q}{p})^{j}, 0\leq j\leq m,\]
then by the boundary conditions
\[0=\frac{m}{q-p}+A+B(\frac{q}{p})^{m}, E[T(0,m)]=A+B, \text{ and } E[T_{1}]=\frac{1}{q-p}+A+B(\frac{q}{p}),\]
we have
\[A=\frac{m}{p-q}+\frac{2pq}{(p-q)^{2}}(\frac{q}{p})^{m},~B=\frac{-2pq}{(p-q)^{2}},\]
hence
\[E[T(j,m)]=\frac{m-j}{p-q}+\frac{2pq}{(p-q)^{2}}((\frac{q}{p})^{m}-(\frac{q}{p})^{j}).\]
\end{proof}

The following comparison lemma is intuitive, but we still provide a rigorous proof for the case when $p>\frac{1}{2}$.
\begin{lemma}\label{lem:crw}
Let $\{S_{n}\}_{n\in\mathbb{N}_{0}}$ and $\{S^{\prime}_{n}\}_{n\in\mathbb{N}_{0}}$ be two asymmetric random walk on $\mathbb{N}_{0}$ reflected at $0$ defined in Lemma \ref{rw}. For $\{S_{n}\}_{n\in\mathbb{N}_{0}}$, each step $X_{n}\sim Ber(p_{n})$ for $S_{n}\neq 0$ and $p_{n}\geq p>\frac{1}{2}$; but for $\{S^{\prime}_{n}\}_{n\in\mathbb{N}_{0}}$, each step $X^{\prime}_{n}\sim Ber(p)$ when $S_{n}\neq 0$. Then, for fixed $m\in\mathbb{N}$, we have
\[E[T(0,m)]\leq E[T^{\prime}(0,m)],\]
where $T(0,m)$ and $T^{\prime}(0,m)$ are hitting times from $0$ to $m$ for $\{S_{n}\}_{n\in\mathbb{N}_{0}}$ and $\{S^{\prime}_{n}\}_{n\in\mathbb{N}_{0}}$, respectively.
\end{lemma}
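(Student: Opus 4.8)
The plan is to sidestep a direct pathwise coupling and argue instead through expected hitting times, using the closed form from Lemma \ref{rw} as a \emph{supersolution}. It is worth noting first why the obvious coupling fails, since this also explains the restriction $p>\frac12$. One would like to drive both walks with a common uniform and let $S$ move up whenever $S'$ does (legitimate because $p_n\ge p$), so as to keep $S_n\ge S'_n$ pathwise. This breaks down exactly at the reflecting boundary: from the configuration $(S_n,S'_n)=(1,0)$ the walk $S'$ is forced up to $1$, while $S$ may step down to $0$ with probability $q_n$, destroying the order; it cannot be repaired without tampering with the marginal laws. Hence I would work with the hitting-time function directly.

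Write $g(j):=E[T'(j,m)]$ for the homogeneous walk, given explicitly by Lemma \ref{rw}; it satisfies $g(m)=0$, $g(0)=1+g(1)$, and $g(j)=1+pg(j+1)+qg(j-1)$ for $1\le j\le m-1$. The first step is to show $g$ is strictly decreasing. From the formula one computes $g(j)-g(j+1)=\frac{1}{p-q}\bigl(1-2q(q/p)^{j}\bigr)$, and since $p>\frac12$ forces $q<\frac12$ while $(q/p)^j\le 1$, the bracket is positive for every $j$. This monotonicity is the heart of the matter, and it is precisely where the hypothesis $p>\frac12$ enters; for $p<\frac12$ the bracket can be negative and the whole comparison collapses.

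Next I would verify that $g$ is a supersolution for the inhomogeneous dynamics. For $1\le j\le m-1$, using $p_n\ge p$ together with $g(j+1)\le g(j-1)$ (shifting the larger weight onto the smaller value),
\[
1+p_ng(j+1)+q_ng(j-1)\le 1+pg(j+1)+qg(j-1)=g(j),
\]
and at $j=0$ the identity $g(0)=1+g(1)$ holds with equality. I then set $Z_k:=g(S_k)+k$ along the inhomogeneous walk started at $0$ and stopped at $\tau:=\inf\{k:S_k=m\}$. The supersolution inequality gives $E[Z_{k+1}\mid\mathcal{F}_k]\le Z_k$ on $\{k<\tau\}$, so $Z_{k\wedge\tau}$ is a supermartingale; because the walk is nearest-neighbour it remains in $\{0,\dots,m-1\}$ before $\tau$, so $g(S_{k\wedge\tau})$ is bounded by $g(0)$ and everything is integrable.

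Finally, optional stopping yields $E[\tau\wedge T]\le E[Z_{\tau\wedge T}]\le Z_0=g(0)$ for every $T$ (using $g\ge 0$), and letting $T\to\infty$ by monotone convergence gives $E[T(0,m)]=E[\tau]\le g(0)=E[T'(0,m)]$, which simultaneously certifies $\tau<\infty$ a.s. I expect the main obstacle to be the monotonicity of $g$ rather than the supermartingale bookkeeping: it is the single point at which the sign of $p-\frac12$ is decisive, and it is exactly what validates the supersolution comparison. The only remaining care is to confine the walk to $\{0,\dots,m-1\}$ before $\tau$ so that $g(S_{k\wedge\tau})$ stays bounded and the optional-stopping step is rigorous.
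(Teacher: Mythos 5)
Your proof is correct, but it takes a genuinely different route from the paper. The paper argues by sequential replacement: it lowers the success probabilities to $p$ one position at a time, working backwards from $m-1$, and shows via the telescoping decomposition $E[T(0,m)]=E[T(0,m-k)]+E[T(m-k,m)]$ and the recurrence relations of Lemma \ref{rw} that each replacement can only increase the expected hitting time; this requires an induction over positions and implicitly treats the $p_{n}$ as attached to positions of the walk. Your argument instead takes the explicit homogeneous solution $g(j)=E[T'(j,m)]$ from Lemma \ref{rw}, verifies that it is strictly decreasing (the computation $g(j)-g(j+1)=\frac{1}{p-q}\bigl(1-2q(q/p)^{j}\bigr)>0$ is where $p>\frac12$ enters, exactly as you say), deduces the supersolution inequality $1+p_{n}g(j+1)+q_{n}g(j-1)\le g(j)$ from $p_{n}\ge p$ and $g(j+1)\le g(j-1)$, and closes with optional stopping applied to the supermartingale $g(S_{k\wedge\tau})+k\wedge\tau$. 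Both proofs are sound, but yours buys two things: it is shorter and avoids the somewhat delicate bookkeeping of the paper's induction, and it is strictly more general, since the supermartingale argument tolerates success probabilities that vary with time or even with the whole history (any adapted sequence bounded below by $p$), not merely with the current position. This added generality is actually welcome, because in the application (Step 1 of Lemma \ref{lem:main}) the relevant probabilities $\overline{\gamma}_{t}$ depend on the full state of the decelerated coalescent rather than on the number of active blocks alone. Your opening remark on why the naive monotone coupling fails at the reflecting boundary is also a useful observation that the paper does not make.
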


\begin{proof}
We start from $\{S_{n}\}_{n\in\mathbb{N}_{0}}$, and decrease the success (moving to right) probability at each position to $p$ one by one and backwardly. That is,
we first decrease the success probability at position $m-1$ and keep others unchanged. Note that $E[T(0,m)]=E[T(0,m-1)]+E[T(m-1,m)]$. In the subsequent, $T$ is gradually changing but we always use the same notation for simplicity.
Now, $E[T(0,m-1)]$ is not affected, but we claim that $E[T(m-1,m)]$ is increased. Actually, 
$P(T(m-1,m)=1)$ has been decreased from $p_{m-1}$ to $p$, but each $P(T(m-1,m)=k)$ for $k=3,5,\cdots$ has been increased: suppose that there are $l\geq 1$ steps from $m-1$ to $m-2$, which contributes a factor $q^{l}_{m-1}p_{m-1}$ to the probability $P(T(m-1,m)=k)$. By $p_{m-1}\geq p\geq \frac{1}{2}$, this factor is increased as $p_{m-1}$ is replaced by $p$ while others are not affected, thus each $P(T(m-1,m)=k)$ is increased. Consequently, we know that  
$E[T(m-1,m)]$ is increased.

Then, we continue to decrease $p_{m-2}$ to $p$. Since $E[T(0,m)]=E[T(0,m-2)]+E[T(m-2,m)]$, we need to argue that $E[T(m-2,m)]$ has been increased as $E[T_{0,m-2}]$ is not affected. Note that $E[T(m-2,m)]=E[T(m-2,m-1)]+E[T(m-1,m)]$, and now  $E[T(m-1,m)]=1+qE[T(m-2,m)]$ for $q=1-p$ as $p_{m-1}$ is replaced by $p$ in the last step. Therefore, $E[T(m-2,m)]=\frac{qE[T(m-2,m-1)]+1}{p}$. By the same argument for $E[T(m-1,m)]$, we know that $E[T_{m-2,m-1}]$ has been increased, and hence $E[T_{m-2,m}]$ has also been increased. 

Inductively, for $k\geq 2$, if we have decreased $p_{i}$ for $m-k+1\leq i\leq m-1$ to $p$, and now we replace $p_{m-k}$ by $p$. Since $E[T(0,m)]=E[T(0,m-k)]+E[T(m-k,m)]$, we need to argue that $E[T(m-k,m)]$ has been increased as $E[T(0,m-k)]$ is not affected. Same as $E[T(m-1,m)]$ and $E[T(m-2,m-1)]$ discussed above, we know that $E[T_{m-k, m-k+1}]$ has been increased.

We have the recurrence relation 
\[E[T(m-j,m)]=1+p_{m-j}E[T(m-j+1,m)]+q_{m-j}E[T(m-j-1,m)], 1\leq j\leq k,\]
where $p_{m-j}=p$ for $1\leq j\leq k-1$. Solving it as in Lemma \ref{rw}, we have
\[E[T(m-k+1,m)]=\frac{k-1}{p-q}+\frac{(\frac{q}{p})^{m-k+1}-(\frac{q}{p})^{m}}{(\frac{q}{p})^{m-k}-(\frac{q}{p})^{m}}(E[T(m-k,m)]+\frac{k}{q-p}).\]
Then, by $E[T(m-k,m)]=E[T(m-k,m-k+1)]+E[T(m-k+1,m)]$, we have
\[E[T(m-k,m)]=\frac{(\frac{q}{p})^{m-k}-(\frac{q}{p})^{m}}{(\frac{q}{p})^{m-k}-(\frac{q}{p})^{m-k+1}}\frac{k-1}{p-q}+\frac{(\frac{q}{p})^{m-k+1}-(\frac{q}{p})^{m}}{(\frac{q}{p})^{m-k}-(\frac{q}{p})^{m-k+1}}(E[T(m-k,m-k+1)]+\frac{k}{q-p}).\]
Consequently, $E[T(m-k,m)]$ has been increased as $E[T(m-k,m-k+1)]$ has been increased and $\frac{q}{p}<1$.
\end{proof}

Let $\overline{T}_{MRCA}(m):=\inf\{t\geq 0:(\overline{N}_{t}^{(0,m)},\overline{M}_{t}^{(0,m)})=(1,0)\}$ be the $T_{MRCA}$ of a sample consisting only of dormant individuals. We have the following asymptotic upper bound of $E[\overline{T}_{MRCA}(m)]$.
 
\begin{lemma}\label{lem:main}
If the support of $\mu$ has an upper bound $\overline{\lambda}_{0}$ and a lower bound $\underline{\lambda}_{0}$, then we have
\[\limsup_{||m||_{TV}\rightarrow\infty}\frac{E[\overline{T}_{MRCA}(m)]}{\log ||m||_{TV}}\leq \frac{2}{\underline{\lambda}_{0}}.\]
\end{lemma}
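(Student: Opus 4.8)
The plan is to track the active-block count $\overline{N}_t$ as an asymmetric random walk and to charge the time of each coalescence to one net upward step of this walk, so that telescoping over the total block count produces a harmonic sum. Write $K:=||m||_{TV}$ and note that, starting from $(0,m)$, the total count $k:=\overline{N}_t+||\overline{M}_t||_{TV}$ decreases by exactly one at each coalescence, so $\overline{T}_{MRCA}(m)=\sum_{k=2}^{K}\sigma_k$, where $\sigma_k$ is the total time spent with total count equal to $k$ before the coalescence sending $k\mapsto k-1$. The finitely many levels $k<m_0$ carry no deceleration and contribute only an $O(1)$ amount, negligible after dividing by $\log K$; so I would focus on $k\geq m_0$.

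For a fixed level $k\geq m_0$, between coalescences the pair moves only by activation ($n\mapsto n+1$) and deactivation ($n\mapsto n-1$), and a coalescence can fire only once $\overline{N}$ reaches the threshold $\theta_k:=\lceil k^{\alpha}\rceil$. First I would pass to the embedded jump chain of $\overline{N}$ and bound its up-probability from below: since $\alpha<1$, below the threshold we have $n\leq\theta_k\ll k$, so the dormant pool has size $\approx k$ and the activation rate is at least $(k-n)\underline{\lambda}_0$, dominating the deactivation rate $nc$; hence the up-probability satisfies $p_n\geq p>\tfrac12$ uniformly for large $k$. This is exactly the hypothesis of Lemma \ref{lem:crw}, which lets me replace the state-dependent $p_n$ by the constant $p$ at the cost of only increasing the hitting time, after which Lemma \ref{rw} gives the explicit expected number of steps to climb to $\theta_k$ and, in particular, the cost $E[T(\theta_k-1,\theta_k)]$ of re-ascending by one step after a coalescence has knocked $\overline{N}$ back down.

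Converting steps to real time, each holding time below the threshold is at most $\bigl((k-n)\underline{\lambda}_0\bigr)^{-1}=O\bigl(1/(k\underline{\lambda}_0)\bigr)$, so multiplying the expected step counts from Lemma \ref{rw} by this bound yields $E[\sigma_k]\leq \tfrac{2}{\underline{\lambda}_0 k}(1+o(1))$; summing over $m_0\leq k\leq K$ turns this into $\tfrac{2}{\underline{\lambda}_0}\sum_{k}\tfrac1k\sim\tfrac{2}{\underline{\lambda}_0}\log K$, which is the claim. Here the coefficient is governed by the lower bound $\underline{\lambda}_0$ through the worst-case activation rate; the factor $2$ is produced by the random-walk hitting-time estimate of Lemma \ref{rw} and is not expected to be tight, which accounts for the gap with the lower-bound coefficient $1/\overline{\lambda}_0$.

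The step I expect to be the main obstacle is making the per-level decomposition genuinely rigorous: a coalescence lowers $\overline{N}$ and $k$ simultaneously and shifts the threshold from $\theta_k$ to $\theta_{k-1}$, so the ``one random walk per level'' picture must be justified by a careful strong-Markov or coupling argument at the coalescence times rather than taken for granted. Controlling the boundary layer near $n=\theta_k$, where the large coalescence rate $C^2_n\asymp k^{2\alpha}$ (using $\alpha>\tfrac12$) competes with activation, and verifying that the overshoot and the $o(1)$ corrections do not disturb the leading harmonic asymptotics, is the delicate part; everything else reduces to the two random-walk lemmas already established and to the domination $E[T_{MRCA}]\leq E[\overline{T}_{MRCA}]$ of Lemma \ref{lem:upper}.
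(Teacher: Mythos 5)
Your proposal is correct and follows essentially the same route as the paper's proof: the paper formalizes your per-level decomposition with stopping times $H_{j}$ (total count reaches $j$) and $D_{j}$ (active count re-attains $\lceil j^{\alpha}\rceil$), controls the embedded walk exactly as you do via Lemma \ref{lem:crw} and Lemma \ref{rw}, and obtains the per-level cost $2(1+\epsilon)^{2}/(\underline{\lambda}_{0}j)$, the factor $2$ coming from one harmonic contribution for the one-step re-ascent after each coalescence ($E[D_{j}-H_{j}]$) plus one for the retrial/waiting at the threshold ($E[H_{j-1}-D_{j}]$), rather than from the hitting-time formula itself. The only bookkeeping differences are that the paper isolates the initial climb from $\overline{N}=0$ to $\lceil m^{\alpha}\rceil$ (cost $O(m^{\alpha-1})$, not covered by a uniform $2/(\underline{\lambda}_{0}k)$ per-level bound but still $o(\log m)$) and bounds the finitely many levels below $m_{0}$ by the explicit single-seed-bank formula of Remark \ref{rem:explicit}.
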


\begin{proof}
We first define recursively the following stopping times. For simplicity, we abuse $m$ and $||m||_{TV}$ without distinguishing between them from now on.  
\begin{eqnarray*}
D_{m}&=&\inf\{t\geq 0: \overline{N}^{(0,m)}_{t}\geq \lceil m^{\alpha} \rceil\};\\
H_{m-1}&=&\inf\{t\geq 0: \overline{N}^{(0,m)}_{t}+||\overline{M}^{(0,m)}_{t}||_{TV}=m-1\};\\
D_{m-1}&=&\inf\{t>H_{m-1}: \overline{N}^{(0,m)}_{t}\geq \lceil(m-1)^{\alpha}\rceil\};\\
H_{m-2}&=&\inf\{t\geq 0: \overline{N}^{(0,m)}_{t}+||\overline{M}^{(0,m)}_{t}||_{TV}=m-2\};\\
&\vdots&
\end{eqnarray*}
We have $D_{m}<H_{m-1}<D_{m-1}<H_{m-1}<\cdots<D_{2}<H_{1}=\overline{T}_{MRCA}(m)$
, and thus for $m_{0}\geq 2$,
\begin{equation}\label{sum}
E[\overline{T}_{MRCA}(m)]=E[D_{m}]+\sum_{j=m_{0}}^{m}E[H_{j-1}-D_{j}]+\sum_{j=m_{0}}^{m-1}E[D_{j}-H_{j}]+E[H_{1}-H_{m_{0}-1}].	
\end{equation}
For the convenience of the readers, we will use similar notations as in the proof of Lemma 4.11 in \cite{blath2016}, and we omit the superscript $(0,m)$ from now on. Let $\overline{\lambda}_{t}$ be the total jump rate of the process $(\overline{N}_{t}, \overline{M}_{t})$ at time $t\geq 0$. Assume that $\overline{M}_{t}=\sum\limits_{i=1}^{K_{t}}M_{i,t}\delta_{\Lambda_{i,t}}$ and $\overline{N}_{t}\geq 2$, then we have 
\[\overline{\lambda}_{t}=C^{2}_{\overline{N}_{t}}I_{\{\overline{N}_{t}\geq \lceil(\overline{N}_{t}+||\overline{M}_{t}||_{TV})^{\alpha}\rceil\}}+c\overline{N}_{t}+\sum_{i=1}^{K_{t}}\Lambda_{i,t}M_{i,t}.\]
Define
\[\overline{\alpha}_{t}:=\frac{C^{2}_{\overline{N}_{t}}I_{\{\overline{N}_{t}\geq \lceil(\overline{N}_{t}+||\overline{M}_{t}||_{TV})^{\alpha}\rceil\}}}{\overline{\lambda}_{t}}, \overline{\beta}_{t}:=\frac{c\overline{N}_{t}}{\overline{\lambda}_{t}},\text{ and }\overline{\gamma}_{t}:=\frac{\sum\limits_{i=1}^{K_{t}}\Lambda_{i,t}M_{i,t}}{\overline{\lambda}_{t}}\]
to be the probabilities of \textit{coalescence}, \textit{deactivation} and \textit{activation}, respectively.
The proof is divided into $4$ steps.

\textbf{Step 1}: Estimate $E[D_{m}]$ when $m$ is large enough.

The reason for defining $D_{m}$ is that the block counting process has to first attain the sate with $\lceil m^{\alpha} \rceil$ active blocks for a possible coalescence. For example, if $m=2$, then obviously we need to attain $(2,0)$ at first.

For any $t<D_{m}$, we have $\overline{N}_{t}\leq \lceil m^{\alpha}\rceil-1<m^{\alpha}$, and $\overline{\lambda}_{t}\geq \underline{\lambda}_{0}m$. Therefore, at the each jump time before $D_{m}$, we have
\[\overline{\beta}_{t}\leq \frac{c}{\underline{\lambda}_{0}m^{1-\alpha}} \text{ and }\overline{\gamma}_{t}=1-\overline{\beta}_{t}\geq 1-\frac{c}{\underline{\lambda}_{0}m^{1-\alpha}}.\]
We assume that $m>(\frac{2c}{\underline{\lambda}_{0}})^{\frac{1}{1-\alpha}}$ to make $\overline{\gamma}_{t}\geq \overline{\beta}_{t}$, and hence $1-\frac{c}{\underline{\lambda}_{0}m^{1-\alpha}}\geq \frac{1}{2}$. 
The expected number of jumps of the process until $D_{m}$ is therefore bounded above by the expected time $E[T(0,\lceil m^{\alpha}\rceil)]$ from $0$ to $\lceil m^{\alpha}\rceil$ for an asymmetric random walk on $\mathbb{N}_{0}$ with $0$ as a reflection boundary. By Lemma \ref{lem:crw} and Lemma \ref{rw}, we know that 
\[E[T(0,\lceil m^{\alpha}\rceil)]\leq \frac{\lceil m^{\alpha}\rceil}{1-\frac{2c}{\underline{\lambda}_{0}m^{1-\alpha}}}\leq (1+\epsilon)m^{\alpha}\]
for any given $\epsilon>0$ and $m\geq (\frac{\underline{\lambda}_{0}+2c(1+\epsilon)}{\underline{\lambda}_{0}\epsilon})^{\frac{1}{1-\alpha}}$. Since the time between two jumps of the process before $D_{m}$ is exponentially distributed with parameter $\overline{\lambda}_{t}$, then we have
\[E[D_{m}]\leq \frac{1}{\underline{\lambda}_{0}m}E[T(0, \lceil m^{\alpha}\rceil))]\leq \frac{1+\epsilon}{\underline{\lambda}_{0}m^{1-\alpha}}\]
for $m>(\frac{2c}{\underline{\lambda}_{0}})^{\frac{1}{1-\alpha}}\vee 
(\frac{\underline{\lambda}_{0}+2c(1+\epsilon)}{\underline{\lambda}_{0}\epsilon})^{\frac{1}{1-\alpha}}=(\frac{\underline{\lambda}_{0}+2c(1+\epsilon)}{\underline{\lambda}_{0}\epsilon})^{\frac{1}{1-\alpha}}$.

\textbf{Step 2}: Estimate $E[H_{j-1}-D_{j}]$ for $j\leq m$ when $j$ is large enough.

At the time $D_{j}$, we have $\overline{\lambda}_{t}\leq C^{2}_{\lceil j^{\alpha}\rceil}+(c\vee\overline{\lambda}_{0}) j$,
and 
\[\overline{\alpha}_{t}\geq \frac{C^{2}_{\lceil j^{\alpha}\rceil}}{C^{2}_{\lceil j^{\alpha}\rceil}+(c\vee\overline{\lambda}_{0})j}\geq \frac{j^{2\alpha}-j^{\alpha}}{j^{2\alpha}+j^{\alpha}+2(c\vee\overline{\lambda}_{0})j}\geq \frac{1}{1+\epsilon}\]
for any given $\epsilon>0$ and $j\geq (1+\frac{2+2(c\vee\overline{\lambda}_{0})}{                                                                                                                                                                                                                                                                                                                                                           \epsilon})^{\frac{1}{\alpha}}$. 

$\overline{\alpha}_{t}$ is the success probability of coalescence at the first jump time after $D_{j}$. If the process fails to coalesce, the number of active blocks will become $\lceil j^{\alpha}\rceil-1$ or $\lceil j^{\alpha}\rceil+1$. In the latter case, the requirement for a coalescence is still satisfied, and the expected time it takes for a jump is less than that of the form one if $j\geq \overline{\lambda}_{0}-c$. Therefore, we are going to consider the worser case, i.e., we always revert to the state with $\lceil j^{\alpha}\rceil-1$ active blocks if a coalescence does not occur at the first jump time after $D_{j}$. Since now the requirement for a coalescence is not satisfied, we need to attain the state with $\lceil j^{\alpha}\rceil$ active blocks again, and the expected number of steps is bounded above by the expected time $E[T(\lceil j^{\alpha}\rceil-1, \lceil j^{\alpha}\rceil)]$ from $\lceil j^{\alpha}\rceil-1$ to $\lceil j^{\alpha}\rceil$ for an asymmetric random walk on $\mathbb{N}_{0}$ with $0$ as a reflection boundary. 

As in Step 1, we have $\overline{\beta}_{t}\leq \frac{c}{\underline{\lambda}_{0}m^{1-\alpha}}$, and we assume that $j>(\frac{2c}{\underline{\lambda}_{0}})^{\frac{1}{1-\alpha}}$. Then, by Lemma \ref{lem:crw} and Lemma \ref{rw}, we know that 
\[E[T(\lceil j^{\alpha}\rceil-1,\lceil j^{\alpha}\rceil)]\leq \frac{1}{1-\frac{2c}{\underline{\lambda}_{0} j^{1-\alpha}}}\leq 1+\epsilon\]
for any given $\epsilon>0$ and $j\geq (\frac{\underline{\lambda}_{0}+2c(1+\epsilon)}{\underline{\lambda}_{0}\epsilon})^{\frac{1}{1-\alpha}}$. Since the 
process is not allowed to coalesce before it attains the state with $\lceil j^{\alpha}\rceil$ active blocks again, the expected time of each step from $\lceil j^{\alpha}\rceil-1$ to $\lceil j^{\alpha}\rceil$ is bounded above by $\frac{1}{\underline{\lambda}_{0}j}$. Taking the expected sojourn time at the state with $\lceil j^{\alpha}\rceil$ active blocks which is bounded above by $\frac{1}{C^{2}_{\lceil j^{\alpha}\rceil}+\underline{\lambda}_{0}j}$ into account, each independent Bernoulli trial for coalescence takes at most 
\[\frac{1+\epsilon}{\underline{\lambda}_{0}j}+\frac{1}{C^{2}_{\lceil j^{\alpha}\rceil}+\underline{\lambda}_{0}j}\leq \frac{1+2\epsilon}{\underline{\lambda}_{0}j}\]
for any given $\epsilon>0$ and $j>[1+(\frac{2}{\epsilon}-2)\underline{\lambda}_{0}]^{\frac{1}{2\alpha-1}}$ since $\alpha>\frac{1}{2}$.

In conclusion, the success probability of this independent Bernoulli trial for coalescence is bounded below by $\frac{1}{\epsilon}$, and the expected time it takes for each trial is bounded above by $\frac{1+2\epsilon}{\underline{\lambda}_{0}j}$, for any given $\epsilon>0$ and 
$j>(\frac{\underline{\lambda}_{0}+2c(1+\epsilon)}{\underline{\lambda}_{0}\epsilon})^{\frac{1}{1-\alpha}}\vee (\overline{\lambda}_{0}-c) 
\vee [1+(\frac{2}{\epsilon}-2)\underline{\lambda}_{0}]^{\frac{1}{2\alpha-1}}$.
Consequently, by Lemma \ref{lem:geo}, we have
\[E[H_{j-1}-D_{j}]\leq \frac{(1+\epsilon)(1+2\epsilon)}{\underline{\lambda}_{0}j}.\]

\textbf{Step 3}: Estimate $E[D_{j}-H_{j}]$ for $j\leq m-1$ when $j$ is large enough.

At the instant $H_{j}-$, there is a coalescence so that the total number of blocks decreases from $j+1$ to $j$, and by the restriction, we know that $\overline{N}_{H_{j}-}\geq \lceil (j+1)^{\alpha}\rceil$ and thus $\overline{N}_{H_{j}}\geq \lceil (j+1)^{\alpha}\rceil-1$. If $\overline{N}_{H_{j}}\geq \lceil j^{\alpha}\rceil$, then $D_{j}=H_{j}$. The only possible exception is the case that $\overline{N}_{H_{j}}=\lceil j^{\alpha}\rceil-1$ since $\lceil (j+1)^{\alpha}\rceil\geq\lceil j^{\alpha}\rceil$. In this worser case, the process is not allowed to coalesce during $(H_{j}, D_{j})$. Therefore, we have
\[E[D_{j}-H_{j}]=E[D_{j}-H_{j}|\overline{N}_{H_{j}}=\lceil j^{\alpha}\rceil-1]P(\overline{N}_{H_{j}}=\lceil j^{\alpha}\rceil-1)\leq E[D_{j}-H_{j}|\overline{N}_{H_{j}}=\lceil j^{\alpha}\rceil-1].\]

Same as in Step 2, the expected number of jumps to attain the state with $\lceil j^{\alpha}\rceil$ active blocks again is bounded above by $E[T(\lceil \sqrt{j}\rceil-1, \lceil \sqrt{j}\rceil)]$, and the expected time of each jump is bounded above by $\frac{1}{\underline{\lambda}_{0}j}$, thus we have
\[E[D_{j}-H_{j}]\leq \frac{1+\epsilon}{\underline{\lambda}_{0}j}\] 
for any given $\epsilon>0$ and $j>(\frac{\underline{\lambda}_{0}+2c(1+\epsilon)}{\underline{\lambda}_{0}\epsilon})^{\frac{1}{1-\alpha}}$.

\textbf{Step 4}: In the above 3 steps, we require $m$ or $j$ to be large enough, 
and note that the lower bounds are all constants only depending on $c,\underline{\lambda}_{0},\overline{\lambda}_{0}$ and any given $\epsilon>0$.
Putting them together, if  
\[m\geq j>(\frac{\underline{\lambda}_{0}+2c(1+\epsilon)}{\underline{\lambda}_{0}\epsilon})^{\frac{1}{1-\alpha}}\vee (\overline{\lambda}_{0}-c) 
\vee [1+(\frac{2}{\epsilon}-2)\underline{\lambda}_{0}]^{\frac{1}{2\alpha-1}},\]
then we have
\[E[D_{m}]\leq \frac{1+\epsilon}{\underline{\lambda}_{0} m^{1-\alpha}},~E[H_{j-1}-D_{j}]\leq \frac{(1+\epsilon)(1+2\epsilon)}{\underline{\lambda}_{0}j},\text{ and }E[D_{j}-H_{j}]\leq \frac{1+\epsilon}{\underline{\lambda}_{0}j}.\]

Now, in the definition of the \textit{decelerated coalescent} and (\ref{sum}), we take the threshold as
\[m_{0}=\lceil (\frac{\underline{\lambda}_{0}+2c(1+\epsilon)}{\underline{\lambda}_{0}\epsilon})^{\frac{1}{1-\alpha}}\vee (\overline{\lambda}_{0}-c) 
\vee [1+(\frac{2}{\epsilon}-2)\underline{\lambda}_{0}]^{\frac{1}{2\alpha-1}}\rceil,\]
then for $||m||_{TV}>m_{0}$, we have
\[E[\overline{T}_{MRCA}(m)]\leq \frac{1+\epsilon}{\underline{\lambda}_{0}m^{1-\alpha}}+\sum_{j=m_{0}}^{m}\frac{(1+\epsilon)(1+2\epsilon)}{\underline{\lambda}_{0}j}+\sum_{j=m_{0}}^{m-1}\frac{1+\epsilon}{\underline{\lambda}_{0}j}+E[H_{1}-H_{m_{0}-1}].\]

For $E[H_{1}-H_{m_{0}-1}]$, by the proof of Theorem \ref{th:mean}, Lemma \ref{lem:compare} and Remark \ref{rem:explicit}, we have a rough upper bound
\[E[H_{1}-H_{m_{0}-1}]\leq C^{2}_{m_{0}-1}E[T^{\{c\delta_{\underline{\lambda}_{0}}\}}_{MRCA}(0,2\delta_{\underline{\lambda}_{0}})]\leq C^{2}_{m_{0}-1}(1+\frac{4c+3}{2\underline{\lambda}_{0}}+\frac{2c^{2}+c}{2\underline{\lambda}_{0}^{2}}),\]
where the superscript $c\delta_{\underline{\lambda}_{0}}$ indicates the single seed-bank case.

Finally, we have
\begin{eqnarray*}
E[\overline{T}_{MRCA}(m)]&\leq & \frac{1+\epsilon}{\underline{\lambda}_{0}m^{1-\alpha}}+\sum_{j=m_{0}}^{m}\frac{(1+\epsilon)(1+2\epsilon)}{\underline{\lambda}_{0}j}+\sum_{j=m_{0}}^{m-1}\frac{1+\epsilon}{\underline{\lambda}_{0}j}+C^{2}_{m_{0}-1}(1+\frac{4c+3}{2\underline{\lambda}_{0}}+\frac{2c^{2}+c}{2\underline{\lambda}_{0}^{2}})\\
&\leq & \frac{2(1+\epsilon)^{2}}{\underline{\lambda}_{0}}\log m
\end{eqnarray*}
for any given $\epsilon>0$ and $m\geq m(c,\underline{\lambda}_{0},\overline{\lambda}_{0}, \epsilon)$, where $m(c,\underline{\lambda}_{0},\overline{\lambda}_{0}, \epsilon)>0$ is a constant only depending on $c,\underline{\lambda}_{0},\overline{\lambda}_{0}$ and $\epsilon$.
Since for any $\epsilon>0$, we have 
\[\limsup_{m\rightarrow\infty}\frac{E[\overline{T}_{MRCA}(m)]}{\log m}\leq \frac{2(1+\epsilon)^{2}}{\underline{\lambda}_{0}}.\]
Letting $\epsilon\rightarrow 0$, the desired result then follows.
\end{proof}

\begin{proof}[\textbf{Proof of Theorem \ref{th:bounds}-2}]
Let $S$ be the first time that all individuals which has never entered dormancy have coalesced into one block. Since the number of these individuals is at most $n$, $E[S]$ is bounded above by the $E[T_{MRCA}(n)]$ of the Kingman coalescent which is $2(1-\frac{1}{n})$. Therefore, we have
\begin{eqnarray*}
E[T_{MRCA}(n,m)]\leq 2+E[T_{MRCA}(N_{S}^{\{n,m\}}, M_{S}^{\{n,m\}})] \leq  2+E[T_{MRCA}(0, N_{S}^{\{n,m\}}\delta_{\underline{\lambda}_{0}}+M_{S}^{\{n,m\}})],
\end{eqnarray*} 
where the last inequality comes from Lemma \ref{lem:compare} and an induction argument. Intuitively, if we transform all active blocks at time $S$ to the worst dormancy state, then their coalescence time will be increased. Moreover, the number of blocks at time $S$ is at most $A^{n}+||m||_{TV}+1$, where 
$A^{n}$ is defined in the proof of Theorem \ref{th:bounds}-1, and it is the total number of blocks that have ever entered dormancy from the active state, $||m||_{TV}$ is the initial number of dormant blocks, and $1$ refers to the block composed of all individuals which has never entered dormancy. Considering the extreme case that all dormant blocks are not allowed to become active again, then obviously 
\[N_{S}^{\{n,m\}}+||M_{s}^{\{n,m\}}||_{TV}\leq A^{n}+||m||_{TV}+1.\]

Now, if we add additional $B:=A^{n}+||m||_{TV}+1-(N_{S}^{\{n,m\}}+||M_{s}^{\{n,m\}}||_{TV})$ dormant blocks with label $\underline{\lambda}_{0}$ at time $S$, then the upper bound will be further increased, and by Lemma \ref{lem:upper}, we have 
\begin{eqnarray}\label{eq:111}
E[T_{MRCA}(n,m)]&\leq & 2+E[T_{MRCA}(0, (B+N_{S}^{\{n,m\}})\delta_{\underline{\lambda}_{0}}+M_{S}^{\{n,m\}}]\nonumber\\
&\leq &	2+E[\overline{T}_{MRCA}((B+N_{S}^{\{n,m\}})\delta_{\underline{\lambda}_{0}}+M_{S}^{\{n,m\}})].
\end{eqnarray}
For simplicity, we abuse $A^{n}+||m||_{TV}+1$ and $(B+N_{S}^{\{n,m\}})\delta_{\underline{\lambda}_{0}}+M_{S}^{\{n,m\}}$ without distinguish between them from now on.

For any given $\epsilon>0$, we further have
\begin{eqnarray}\label{eq:444}
E[\overline{T}_{MRCA}(A^{n}+||m||_{TV}+1)]
&=& E[\overline{T}_{MRCA}(A^{n}+||m||_{TV}+1)I_{\{A^{n}<(2c+\epsilon)\log  n\}}]\nonumber\\
&&+	E[\overline{T}_{MRCA}(A^{n}+||m||_{TV}+1)I_{\{A^{n}\geq(2c+\epsilon)\log  n\}}].
\end{eqnarray}

Then, by the proof of Lemma \ref{lem:main}, 
\begin{eqnarray}\label{eq:222}
&&E[\overline{T}_{MRCA}(A^{n}+||m||_{TV}+1)I_{\{A^{n}<(2c+\epsilon)\log  n\}}]\leq  E[\overline{T}_{MRCA}((2c+\epsilon)\log n+||m||_{TV}+1)I_{\{A^{n}<(2c+\epsilon)\log  n\}}]\nonumber\\
&&\leq  \frac{2(1+\epsilon)^{2}}{\underline{\lambda}_{0}} \log((2c+\epsilon)\log n+||m||_{TV}+1)P(A^{n}<(2c+\epsilon)\log  n)	
\end{eqnarray}
for large enough $n+||m||_{TV}$.

By (\ref{eq:Abound1}) which comes from Chebyshev's inequality, we have
\begin{eqnarray}\label{eq:333}
E[\overline{T}_{MRCA}(A^{n}+||m||_{TV}+1)I_{\{A^{n}\geq(2c+\epsilon)\log  n\}}]&\leq& \frac{2(1+\epsilon)^{2}}{\underline{\lambda}_{0}} \log((2c+\epsilon)\log n+||m||_{TV}+1)\frac{2c\log n}{\epsilon^{2}(\log n)^{2}}\nonumber\\
&\leq& \frac{4c(1+\epsilon)^{2}}{\epsilon^{2}\underline{\lambda}_{0}}\frac{\log((2c+\epsilon)\log n+||m||_{TV}+1)}{\log n}	
\end{eqnarray}
for large enough $n+||m||_{TV}$.

Finally, combing (\ref{eq:111}), (\ref{eq:444}), (\ref{eq:222}) and (\ref{eq:333}) together, and divided by $\log(\log n+\frac{||m||_{TV}}{2c})$ on both sides, we have
\begin{eqnarray*}
\frac{E[T_{MRCA}(n,m)]}{\log(\log n+\frac{||m||_{TV}}{2c})}&\leq& \frac{2(1+\epsilon)^{2}}{\underline{\lambda}_{0}}\frac{\log(\log n+\frac{||m||_{TV}+1}{2c})+\log(2c+\epsilon)}{\log(\log n+\frac{||m||_{TV}}{2c})}P(A^{n}<(2c+\epsilon)\log  n),\\
&&+\frac{4c(1+\epsilon)^{2}}{\epsilon^{2}\underline{\lambda}_{0}}\frac{\log(\log n+\frac{||m||_{TV}+1}{2c})+\log(2c+\epsilon)}{\log(\log n+\frac{||m||_{TV}}{2c})\cdot\log n}.
\end{eqnarray*}

Letting $n\rightarrow \infty$ and $||m||_{TV}\rightarrow \infty$, by (\ref{eq:Abound1}), we then have
\[\limsup_{\substack{n\rightarrow\infty\\||m||_{TV}\rightarrow\infty}}\frac{E[T_{MRCA}(n,m)]}{\log(\log n+\frac{||m||_{TV}}{2c})}\leq \frac{2(1+\epsilon)^{2}}{\underline{\lambda}_{0}}.\]
The desired result then follows by letting $\epsilon\rightarrow 0$.
\end{proof}

The asymptotic upper bound of $E[T_{MRCA}(n,m)]$ for the Gamma distribution is left for future work.

\bibliographystyle{alpha}  
\bibliography{references}

\end{document}